\newif\ifarxiv
\newif\ifpnas
\newcommand{\mysection}[1]{\section{#1}}
\newcommand{\mysubsection}[1]{\subsection{#1}}
\newcommand{\mysection}[1]{\section*{#1}}
\newcommand{\mysubsection}[1]{\subsection*{#1}}
\newcommand{\Leskela}{Leskel\"a\xspace}
\definecolor{Green}{RGB}{0,170,0}
\definecolor{Red}{RGB}{230,0,0}
\newcommand{\eqsref}[2]{(\ref{#1}--\ref{#2})}
\renewcommand{\eqref}[1]{[\ref{#1}]}
\newcommand{\eqsref}[2]{[\ref{#1}--\ref{#2}]}
\newlength{\mywidth}
\theoremstyle{definition}
\newtheorem{theorem}{Theorem}
\theoremstyle{plain}
\newtheorem{lemma}[theorem]{Lemma}
\newtheorem{proposition}[theorem]{Proposition}
\newcommand{\R}{\mathbb{R}}
\newcommand{\floor}[1]{\left\lfloor #1 \right\rfloor}
\newcommand{\abs}[1]{{\lvert#1\rvert}}
\newcommand{\norm}[1]{||#1||}
\newcommand{\zeronorm}[1]{||#1||_0}
\newcommand{\onenorm}[1]{||#1||_1}
\newcommand{\supnorm}[1]{||#1||_\infty}
\newcommand{\supnormt}[1]{\norm{#1}_{\infty,t}}
\newcommand{\Lipnormt}[1]{\norm{#1}_{{\rm Lip},t}}
\newcommand{\cmax}{c_\infty}
\newcommand{\weq}{\ = \ }
\newcommand{\wle}{\ \le \ }
\newcommand{\wge}{\ \ge \ }
\newcommand{\hu}{\hat u}
\newcommand{\halpha}{\hat \alpha}
\newcommand{\fbg}{\frac{\beta}{\gamma}}
\newcommand{\fgb}{\frac{\gamma}{\beta}}
\newcommand{\cX}{\mathcal{X}}
\newcommand{\Rnaught}{\mathcal{R}_0}
\title{Optimal intervention strategies for minimizing total incidence during an epidemic}
\author{Tom Britton\footnote{Department of Mathematics,
Stockholm University,
SE-106 91 Stockholm, Sweden}  \and Lasse \Leskela\footnote{Department of Mathematics and Systems Analysis,
Aalto University,
FI-02015 Espoo, Finland}}
\author[a,1,2]{Tom Britton}
\author[b,1,2]{Lasse Leskel\"a} 
\affil[a]{
Department of Mathematics,
Stockholm University,
SE-106 91 Stockholm, Sweden
}
\affil[b]{Department of Mathematics and Systems Analysis,
Aalto University,
FI-02015 Espoo, Finland
}
\keywords{SIR epidemic model $|$ prevention $|$ final size $|$ optimize} 
\begin{document}
\maketitle
\fi

\begin{abstract}
This article considers the minimization of the total number of infected individuals over the course of an epidemic in which the rate of infectious contacts can be reduced by time-dependent nonpharmaceutical interventions. The societal and economic costs of interventions are taken into account using a linear budget constraint which imposes a trade-off between short-term heavy interventions and long-term light interventions. We search for an optimal intervention strategy in an infinite-dimensional space of controls containing 
multiple consecutive lockdowns, gradually imposed and lifted restrictions, and various heuristic controls based for example on tracking the effective reproduction number. Mathematical analysis shows that among all such strategies, the global optimum is
achieved by a single constant-level lockdown of maximum possible magnitude. Numerical simulations highlight the need of careful timing of such interventions, and illustrate their benefits and disadvantages compared to strategies designed for minimizing peak prevalence. Rather counterintuitively, adding restrictions prior to the start of a well-planned intervention strategy may even increase the total incidence.
\end{abstract}

\ifpnas
\dates{This manuscript was compiled on \today}
\doi{\url{www.pnas.org/cgi/doi/10.1073/pnas.XXXXXXXXXX}}
\fi

\ifpnas
\begin{document}
\maketitle
\fi

\ifpnas
\thispagestyle{firststyle}
\ifthenelse{\boolean{shortarticle}}{\ifthenelse{\boolean{singlecolumn}}{\abscontentformatted}{\abscontent}}{}
\fi

\ifarxiv
%\tableofcontents
\newcommand{\startword}{The }
\section{Introduction}
\fi
\ifpnas
\newcommand{\startword}{\dropcap{T}he }
\fi

\startword recent pandemic has underlined the need for non-pharmaceutical interventions
to help mitigating disease burden in the society, along with vaccines and medications. Despite a solid body of past literature and an enormous research effort during the past two years on epidemic modelling, certain fundamental questions related to the optimal control of epidemics still remain open. In this article we discuss the optimal employment of non-pharmaceutical interventions to mitigate disease burden under the assumption that interventions incur societal costs which are accumulated over time. We focus on the minimization of long-term total incidence (the share of initially susceptible individuals who eventually become infected), and seek answers to questions of type:
\begin{quote}
\emph{Should interventions be imposed early, or later after prevalence has grown? Is it better to impose a one-month lockdown at 50\% intervention level, or
a milder two-month lockdown at 25\% intervention level?
}
\end{quote}

\noindent Numerical simulations and control theory are routinely used to answer such questions on a case-by-case basis for models of unlimited complexity. An alternative approach, pursued here, is to search for universal mathematical principles characterizing the shape and size of optimal interventions in simple parsimonious models.

The simplest mathematical epidemic model, incorporating a time-dependent intervention strategy is arguably defined as follows. We assume that the transmission rate of infectious contacts at time $t$ can be reduced by a factor
\[
 0 \le u(t) \le 1.
\] 
Under classical simplifying assumptions that recovery from disease gives full immunity, overall vaccination status in the population remains constant, there are no imported cases from other populations, population size remains constant, and the population is homogeneously mixing, the evolution of the epidemic can be modelled using differential equations
\begin{equation}
 \label{eq:Dynamics}
 \begin{aligned}
  S' &\weq - (1-u) \beta SI, \\
  I' &\weq (1-u) \beta SI - \gamma I,\\
  R' &\weq \gamma I,
 \end{aligned}
\end{equation}
where $\beta>0$ is a baseline transmission rate in absence of interventions, $\gamma > 0$ is the recovery rate, and $S(t)$, $I(t)$, and $R(t)$ represent the shares of susceptible, infectious, and recovered individuals in the population, respectively. The special case with no interventions ($u(t) = 0$ for all $t$) reduces to the constant-rate version of the classical SIR model~\cite{Kermack_McKendrick_1927}.

We write $(S,I,R) = (S_u,I_u,R_u)$ to emphasize that the epidemic trajectory depends on the chosen intervention strategy $u$. The societal and economic costs incurred by adopting an intervention strategy $u$ can by measured by 
\begin{itemize}
\item total cost $\onenorm{u} = \int_0^\infty u(t) \, dt$,
\item total duration $\zeronorm{u} = \int_0^\infty 1(u(t)>0) \, dt$,
\item maximum intervention level $\supnorm{u} = \sup_{t \ge 0} u(t)$.
\end{itemize}
For this model, the basic reproduction number equals $\Rnaught = \fbg$.  In what follows, we will assume that $\Rnaught > 1$ because otherwise epidemic outbreaks would not happen even in the absence of interventions \cite{Diekmann_Heesterbeek_Britton_2013}.

\mysubsection{Minimizing peak prevalence}
Recent theoretical research \cite{Morris_Rossine_Plotkin_Levin_2021,Greene_Sontag_2021,Miclo_Spiro_Weibull_2020,Avram_Freddi_Goreac_2022} on minimizing disease burden has mostly focused on minimizing the peak prevalence
\[
 \supnorm{I_u}
 \weq \sup_{t \ge 0} I_u(t).
\]
Morris et al.\ \cite{Morris_Rossine_Plotkin_Levin_2021} proved that the peak prevalence subject to interventions of bounded duration $\zeronorm{u} \le c_0$ is minimized by an intervention of form
\begin{equation}
 \label{eq:WaitMaintainSuppressRelax}
 u(t)
 \weq
 \left\{
 \begin{aligned}
  &0,
  \quad & t & \in (0,t_1]     & \quad & \text{(wait)} \\
  &1 - \tfrac{1}{(\beta/\gamma) S(t)},
  \quad & t & \in (t_1, t_2] & \quad & \text{(maintain)} \\
  &1,
  \quad & t & \in (t_2, t_3] & \quad & \text{(suppress)} \\
  &0,
  \quad &t & \in (t_3, \infty) & \quad & \text{(relax)},
 \end{aligned}
 \right.
\end{equation}
and discovered that such interventions induce a second wave having a peak of same height as the first wave.  Such a twin peaks phenomenon was also noted in~\cite{Greene_Sontag_2021}. Miclo, Spiro, and Weibull \cite{Miclo_Spiro_Weibull_2020} studied a dual problem of minimizing the intervention cost $\norm{u}_1$ subject to a bounded peak prevalence $\supnorm{I_u} \le \cmax$, and proved that the optimum is of form
\begin{equation}
 \label{eq:WaitMaintainRelax}
 u(t)
 \weq
 \left\{
 \begin{aligned}
  &0,
  \quad & t & \in (0,t_1]     & \quad & \text{(wait)} \\
  &1 - \tfrac{1}{(\beta/\gamma) S(t)},
  \quad & t & \in (t_1, t_2] & \quad & \text{(maintain)} \\
  &0,
  \quad &t & \in (t_2, \infty) & \quad & \text{(relax)}.
 \end{aligned}
 \right.
\end{equation}
The maintain phase in \eqsref{eq:WaitMaintainSuppressRelax}{eq:WaitMaintainRelax}
is defined so that $I'_u=0$, which keeps the infectious share at a constant level. A similar control problem restricted to a finite time horizon is analysed in~\cite{Avram_Freddi_Goreac_2022}.

\mysubsection{Minimizing total incidence}

Recent theoretical works on minimizing total incidence include  \cite{Feng_Iyer_Li_2021,Bliman_Duprez_Privat_Vauchelet_2021,Ketcheson_2021,Bliman_Duprez_2021}. Under different budget constraints on intervention costs,
they all conclude 
that the optimal interventions are constant-level lockdowns with shape
\begin{equation}
 \label{eq:WaitSuppressRelax}
 u(t)
 \weq
 \left\{
 \begin{aligned}
  &0,
  \quad & t & \in (0,t_1]     & \quad & \text{(wait)} \\
  &c,
  \quad & t & \in (t_1, t_2] & \quad & \text{(suppress)} \\
  &0,
  \quad &t & \in (t_2, \infty) & \quad & \text{(relax)}.
 \end{aligned}
 \right.
\end{equation}
Feng, Iyer, and Li \cite{Feng_Iyer_Li_2021} considered on-off controls with finitely many switching times, with duration $\zeronorm{u} = c_0$ and maximum level $\supnorm{u} = \cmax$.
In parallel works, Bliman et al.~\cite{Bliman_Duprez_Privat_Vauchelet_2021} and Ketcheson~\cite{Ketcheson_2021} studied piecewise continuous interventions subject to $\supnorm{u} \le \cmax$ and a bounded intervention time window $[0,T_0]$. Bliman and Duprez \cite{Bliman_Duprez_2021}
considered piecewise continuous interventions with bounded duration $\zeronorm{u} \le c_0$
and level $\supnorm{u} \le \cmax$,
and described a numerical
method to determine the optimal start time $t_1$. The analysis in \cite{Bliman_Duprez_2021} represents the state-of-art, covering the full class of piecewise continuous intervention strategies, and taking properly into account the accumulation of intervention costs over time.

The results above provide elegant mathematical principles describing optimal intervention shapes, but they all are limited in one important aspect. Namely, they ignore the fact that interventions of higher magnitude usually inflict a higher societal cost, and as such cannot help in answering questions related to the trade-off between the magnitude and duration of an intervention.

\mysection{Theoretical results}

\mysubsection{Optimal intervention strategy}

Our goal is to minimize total incidence among intervention strategies with total time-aggregated cost bounded by $||u||_1\le c_1$ and maximum intervention level bounded by $\supnorm{u} \le \cmax$ for some $0 < \cmax \le 1$. The latter constraint reflects the fact that a complete lockdown might be impossible to implement in practice.
We search for a global optimum in an infinite-dimensional space of piecewise continuous intervention strategies containing multiple consecutive lockdowns, gradually imposed and lifted restrictions, and various heuristic controls based for example on tracking the effective reproduction number \cite{Cianfanelli_etal_2022}. The following theorem shows that among all such intervention strategies, the global optimum is achieved by a simple constant-level lockdown.
 
\begin{theorem}
\label{the:Main}
For any initial state with $S(0), I(0)>0$, the total incidence among all piecewise continuous intervention strategies such that $\onenorm{u} \le c_1$ and $\supnorm{u} \le \cmax$ is minimized by an intervention of form \eqref{eq:WaitSuppressRelax} with
level $c=\cmax$,
duration $t_2-t_1 = c_1/\cmax$,
and a uniquely determined start time $t_1$.
\end{theorem}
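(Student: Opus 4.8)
The plan is to first collapse the infinite-dimensional control problem into a scalar optimization via a conservation identity, and then pin down the scalar optimum. Since total incidence is $S(0)-S_u(\infty)$, I would work with the equivalent goal of \emph{maximizing} the final susceptible share $S_u(\infty)$. Integrating $(\log S)'=-(1-u)\beta I$ over $[0,\infty)$ and using $(S+I)'=-\gamma I$ (so that $\int_0^\infty I_u\,dt = (S(0)+I(0)-S_u(\infty))/\gamma$) gives the exact relation
\[
 g\bigl(S_u(\infty)\bigr) \weq \Rnaught\, I(0) \ - \ \beta \int_0^\infty u(t)\, I_u(t)\,dt,
 \qquad g(s) \ := \ \log\tfrac{S(0)}{s}-\Rnaught\bigl(S(0)-s\bigr).
\]
The function $g$ is strictly decreasing on $(0,1/\Rnaught)$, and one checks that every admissible trajectory terminates with $S_u(\infty)\le 1/\Rnaught$ (a finite budget cannot halt the outbreak before the herd-immunity threshold). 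On this branch $S_u(\infty)$ is a strictly increasing function of $M(u):=\int_0^\infty u I_u\,dt$, so the problem reduces to maximizing the $I$-weighted budget $M(u)$ subject to $\supnorm{u}\le\cmax$ and $\onenorm{u}\le c_1$. Proving the reduction carefully is the first task, the delicate point being the claim $S_u(\infty)\le 1/\Rnaught$ and that the reachable final states all lie on the decreasing branch of $g$.

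This reduction already suggests the answer. If the profile $I_u$ were frozen at a single-peaked shape (as in the absence of interventions), maximizing $\int_0^\infty u I\,dt$ over $0\le u\le\cmax$ and $\int u\le c_1$ is a rearrangement problem whose solution is $u=\cmax$ on the super-level set $\{t: I(t)>\theta\}$, with $\theta$ chosen so the budget is exactly spent. Being the super-level set of a unimodal curve, this is a single interval of length $c_1/\cmax$ that saturates the budget — precisely the claimed form \eqref{eq:WaitSuppressRelax} with $c=\cmax$ and $t_2-t_1=c_1/\cmax$. The genuine difficulty is that $I_u$ is not frozen: locking down reshapes $I$.

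To handle this feedback rigorously I would apply Pontryagin's maximum principle to the reduced problem, adjoining the budget through a multiplier $\mu\ge0$. The Hamiltonian is affine in $u$ with switching function
\[
 \sigma(t) \weq I_u(t)-\mu+\beta S_u(t) I_u(t)\,\bigl(\lambda_S(t)-\lambda_I(t)\bigr),
\]
so that $u=\cmax$ where $\sigma>0$ and $u=0$ where $\sigma<0$; the frozen-$I$ heuristic is exactly the case in which the costate term is dropped and lockdown occurs on $\{I>\mu\}$. The main work, and the main obstacle, is to show that this costate correction does not destroy the single-interval structure: using the adjoint equations $\lambda_S'=(1-u)\beta I(\lambda_S-\lambda_I)$ and $\lambda_I'=-u+(1-u)\beta S(\lambda_S-\lambda_I)+\gamma\lambda_I$ with $\lambda(\infty)=0$, I would prove that $\sigma$ crosses zero in each direction at most once (ruling out multiple lockdowns), exclude a singular arc $\sigma\equiv 0$ by differentiating the identity, and show $\mu>0$ so the budget is saturated. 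I expect this sign analysis of $\sigma$ to be the hardest technical step, since it requires controlling the coupled costate system along the trajectory.

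The structure theorem then reduces the problem to the one-parameter family of controls of form \eqref{eq:WaitSuppressRelax} with $c=\cmax$ and fixed window length $\ell=c_1/\cmax$, parametrized by the start time $t_1$. Writing $F(t_1):=S_u(\infty)$ for this family, the final step is to show $F$ has a unique maximizer. I would compute $F'(t_1)$ through the standard sensitivity equations for the $t_1$-dependence of the trajectory and show it vanishes at exactly one point — intuitively, the tension between starting the lockdown too early (the epidemic rebounds afterwards) and too late (too much of the susceptible pool is already depleted) makes $F$ strictly unimodal. Establishing this strict unimodality, equivalently the uniqueness of the interior stationary point, yields the uniquely determined $t_1$ in the statement.
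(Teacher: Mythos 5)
Your reduction is correct and takes a genuinely different route from the paper: the identity $g(S_u(\infty)) \weq \Rnaught I(0) - \beta\int_0^\infty u(t) I_u(t)\,dt$ is valid, $g$ is strictly decreasing on $(0,\fgb)$, and the fact that any finite-budget control leaves $S_u(\infty)\le\fgb$ is exactly Proposition~\ref{the:Time2HerdImmunity} of the paper; so minimizing total incidence is indeed equivalent to maximizing the $I$-weighted expenditure $M(u)=\int_0^\infty u I_u\,dt$, and your Hamiltonian, switching function and adjoint system are set up correctly. The paper, by contrast, never touches costates: it approximates a near-optimal control by truncation (Proposition~\ref{the:UI}), quantizes it into a bang--bang control at level $\cmax$ (Lemma~\ref{the:SlottingBangBang}, Proposition~\ref{the:QuantizationFinal}), prolongs the last lockdown interval to exhaust the budget (Lemma~\ref{the:Monotonicity}), and then imports the two remaining structural facts---that finitely many lockdown intervals should be merged into one, and that the optimal start time exists and is unique---from \cite[Theorem 1.1]{Feng_Iyer_Li_2021} and \cite[Theorem 1]{Bliman_Duprez_2021}.

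As a proof, however, your proposal has genuine gaps, and they sit exactly where the content of the theorem lies. First, Pontryagin's principle is a necessary condition satisfied by an optimal control, so you must first prove that an optimizer exists; on an infinite horizon, with an isoperimetric constraint and an admissible class (piecewise continuous controls) that is not closed under any natural limit operation, existence is not routine---the paper's architecture (showing the infimum over all controls equals the infimum over the one-parameter lockdown family, which is then attained) is designed precisely to sidestep this. Second, the transversality condition $\lambda(\infty)=0$ is not automatic for infinite-horizon problems (this is the content of Halkin's classical counterexample); justifying it requires uniform decay estimates of the type the paper proves in Proposition~\ref{the:UI}, or a truncation argument you do not supply. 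Third, and decisively, the single-crossing analysis of the switching function $\sigma$ (ruling out multiple lockdowns and singular arcs, and showing $\mu>0$ so the budget is saturated), together with the strict unimodality of $F(t_1)$ in your last step, is announced as a goal rather than proven; these steps carry essentially the whole theorem, and they are hard enough that even the paper does not prove them from scratch but cites \cite{Feng_Iyer_Li_2021} and \cite{Bliman_Duprez_2021} for the corresponding facts. Until the costate sign analysis is actually carried out, what you have is a plausible program with a correct first reduction, not a proof.
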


Theorem~\ref{the:Main} provides a simple answer to the simple but mathematically nontrivial question presented in the beginning, indicating that heavy lockdowns of short duration outperform light lockdowns of longer duration.
The optimal start time $t_1$ may be numerically determined by solving a one-dimensional optimization problem, as  described in \cite{Bliman_Duprez_2021,Feng_Iyer_Li_2021}.

\mysubsection{Upper and lower bounds}

As a byproduct of the mathematical analysis needed for proving Theorem~\ref{the:Main}, we obtain universal upper and lower bounds for total incidence, valid for all intervention strategies with a finite cost. The upper bound corresponds to total incidence in the absence of interventions, and is expressed in terms of the limiting susceptible share $S_0(\infty)$ in a standard SIR epidemic with no interventions, which is numerically obtained as the unique solution in the interval $(0,\fgb)$ of
equation
\[
 S_0(\infty) - \fgb \log S_0(\infty)
 \weq S(0)+I(0) - \fgb \log S(0).
\]

\begin{theorem}
\label{the:Bounds}
For any initial state with $S(0),I(0) > 0$ and
for any intervention strategy with finite cost $\onenorm{u} < \infty$, the total incidence is at least
$1-\gamma /(\beta S(0))$ and at most
$1-S_0(\infty)/S(0)$
\end{theorem}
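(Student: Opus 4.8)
The plan is to express the total incidence in terms of the final susceptible share and then prove matching one-sided bounds on that share. Because $S' = -(1-u)\beta SI \le 0$, the trajectory $S$ is nonincreasing and bounded below by $0$, so the limit $S(\infty) = \lim_{t\to\infty} S(t)$ exists, and the total incidence equals $1 - S(\infty)/S(0)$. Likewise $R' = \gamma I \ge 0$ and $R \le 1$, so $R(\infty)$ exists and $\int_0^\infty I \, dt = \gamma^{-1}(R(\infty)-R(0)) < \infty$; since $I$ is bounded with bounded derivative, hence uniformly continuous, this forces $I(\infty)=0$, while $I(t) \ge I(0)e^{-\gamma t} > 0$ for every finite $t$. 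In these terms the two claimed inequalities are exactly
\[
 S_0(\infty) \wle S(\infty) \wle \fgb.
\]

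For the lower bound $S(\infty) \le \gamma/\beta$, I would integrate the logarithmic derivative $(\log I)' = (1-u)\beta S - \gamma$. Since $I(\infty)=0$ while $I(0)>0$, the left side integrates to $-\infty$, so $\int_0^\infty [(1-u)\beta S - \gamma]\,dt = -\infty$. Writing $(1-u)\beta S = \beta S - u\beta S$ and using the finite-cost hypothesis together with $S \le S(0)$ gives $\int_0^\infty u\beta S\,dt \le \beta S(0)\onenorm{u} < \infty$, whence $\int_0^\infty [\beta S - \gamma]\,dt = -\infty$ as well. If $S(\infty)>\gamma/\beta$, then monotonicity of $S$ yields $\beta S(t) - \gamma \ge \beta S(\infty) - \gamma > 0$ for all $t$, making this integral $+\infty$ --- a contradiction. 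Hence $S(\infty) \le \gamma/\beta$.

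For the upper bound $S(\infty) \ge S_0(\infty)$, the idea is to exhibit a Lyapunov-type functional. Setting $V = S + I - \fgb \log S$, a direct computation gives
\[
 V' \weq S' + I' - \fgb\,\frac{S'}{S} \weq -\gamma u I \wle 0,
\]
so $V$ is nonincreasing. Evaluating at $t=0$ and $t=\infty$ (where $I(\infty)=0$) and invoking the defining relation for $S_0(\infty)$ yields
\[
 S(\infty) - \fgb \log S(\infty) \wle S(0)+I(0) - \fgb\log S(0) \weq S_0(\infty) - \fgb\log S_0(\infty).
\]
The function $g(s) = s - \fgb\log s$ is strictly decreasing on $(0,\gamma/\beta]$, and by the lower bound both $S(\infty)$ and $S_0(\infty)$ lie in this interval; therefore $g(S(\infty)) \le g(S_0(\infty))$ forces $S(\infty) \ge S_0(\infty)$, as desired.

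The main obstacle is not any single calculation but the careful handling of the limiting behaviour at $t=\infty$, and in particular pinpointing where finiteness of the cost is indispensable. The Lyapunov argument for the upper bound uses only $u \ge 0$, but the lower bound genuinely needs $\onenorm{u}<\infty$: an unbounded-cost strategy could suppress transmission indefinitely and leave $S(\infty)$ above $\gamma/\beta$. I would therefore take particular care to justify $I(\infty)=0$ and the convergence of $\int_0^\infty u\beta S\,dt$ rigorously, since these two facts underpin the contradiction driving the lower bound.
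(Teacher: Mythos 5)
Your proof is correct, and it takes a genuinely different route from the paper's, most notably for the upper bound. For the lower bound, the paper proves a stronger, quantitative statement (Proposition~\ref{the:Time2HerdImmunity}): every finite-cost control reaches herd immunity $S_u(t) \le \gamma/\beta$ at a \emph{finite} time, bounded explicitly in terms of $\onenorm{u}$, $\beta$, $\gamma$, and $I(0)$; the limit bound then follows from monotonicity of $S$. You instead obtain $S(\infty) \le \gamma/\beta$ qualitatively, by contradiction: $\log I(t) \to -\infty$ forces $\int_0^t \big[(1-u)\beta S - \gamma\big]\,ds \to -\infty$, and since $\int_0^\infty u\beta S\,dt \le \beta S(0)\onenorm{u} < \infty$, the assumption $S(\infty) > \gamma/\beta$ would make the same integral diverge to $+\infty$. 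Both arguments use the finite-cost hypothesis at exactly the same spot, but the paper's version buys an explicit herd-immunity time bound that is reused in the uniform integrability estimate (Proposition~\ref{the:UI}) underlying Theorem~\ref{the:Main}, whereas yours is shorter and fully self-contained. For the upper bound the divergence is more substantial: the paper invokes the comparison theorem of Chikina and Pegden (final-size monotonicity under pointwise ordering of controls), which moreover requires a ``straightforward modification'' of the cited result since the roles of the nondecreasing control must be swapped; you give a direct Lyapunov argument instead, observing that $V = S + I - \fgb \log S$ satisfies $V' = -\gamma u I \le 0$, so that $V(\infty) \le V(0)$, and then converting this into $S(\infty) \ge S_0(\infty)$ via the strict monotonicity of $s \mapsto s - \fgb\log s$ on $(0,\gamma/\beta]$ (the paper's Lemma~\ref{the:Special}). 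This is cleaner, avoids the external dependency, and in fact reuses precisely the ``vulnerability'' functional that the paper itself introduces in Lemma~\ref{the:Monotonicity} for the separate prolongation argument. Two small points worth tightening in a final write-up: $V$ is only piecewise differentiable since $u$ is piecewise continuous, so conclude monotonicity from continuity of $V$ plus $V' \le 0$ off the discontinuity set; and note that $S(\infty) > 0$ (immediate from the Lyapunov bound, or from $\log S(\infty) \ge \log S(0) - \beta \int_0^\infty I\,dt > -\infty$), so that $\log S(\infty)$ is well defined when passing to the limit.
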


\mysection{Numerical results}

The performance of the optimal strategy in Theorem~\ref{the:Main} is investigated by numerical
\ifpnas
simulations
\fi
\ifarxiv
simulations\footnote{\url{https://github.com/lasseleskela/epidemic-models-with-control}.}
\fi
using parameters with basic reproduction number $\Rnaught = 3$ and an average infectious period of 5 days (Table~\ref{tab:Parameters}),
roughly in line with the first wave of the COVID-19 pandemic in spring 2020, e.g.\ \cite{Flaxman_etal_2020}. The initial state corresponds to an importation of 1000 infectious individuals into a population of size 10 million.

\begin{table}[h]
 \centering
 \begin{tabular}{lll}
  \toprule
  \bf Parameter & \bf Value & \bf Meaning \\
  \midrule
  $\beta$ & 0.6 & Transmission rate (per day) \\
  $\gamma$ & 0.2 & Recovery rate (per day) \\
  $S(0)$ & 0.9999 & Initial susceptible share \\
  $I(0)$ & 0.0001 & Initial infectious share \\
  \bottomrule
 \end{tabular}
 \caption{Parameters used in numerical simulations.}
 \label{tab:Parameters}
\end{table}

\mysubsection{Performance}

When an epidemic with a basic reproduction number $\Rnaught = 3$ hits an overwhelmingly susceptible population, eventually 94.0\% of individuals will become infected if no interventions are imposed. Furthermore, at least $1-1/(S(0)\Rnaught) = 66.6\%$ will become infected under an arbitrary intervention with a finite total cost (Theorem \ref{the:Bounds}).
The top panel in Fig.~\ref{fig:PerformanceLevel} displays the total incidence that is achievable using optimal strategies with total cost $\onenorm{u}$ bounded by $c_1 = 7.5, 15, 30$ and maximum intervention level bounded by $0 < \cmax \le 1$. It is seen that interventions with even a modest budget of $c_1=7.5$ may significantly reduce total incidence if they can be imposed at a sufficient magnitude.
On the other hand, long but mild interventions have little effect on total incidence. 

The bottom panel in Fig.~\ref{fig:PerformanceLevel} shows that the higher the maximal intervention level $\cmax$, the longer one should wait before imposing the intervention, and that mild interventions should be started immediately. For a complete lockdown ($\cmax=1$), the optimal timing is to wait for 24.94 days, corresponding to the time instant at which the uncontrolled epidemic reaches herd immunity.

\begin{figure}[h]
\centering
\setlength{\mywidth}{85mm}
\includegraphics[width=\mywidth]{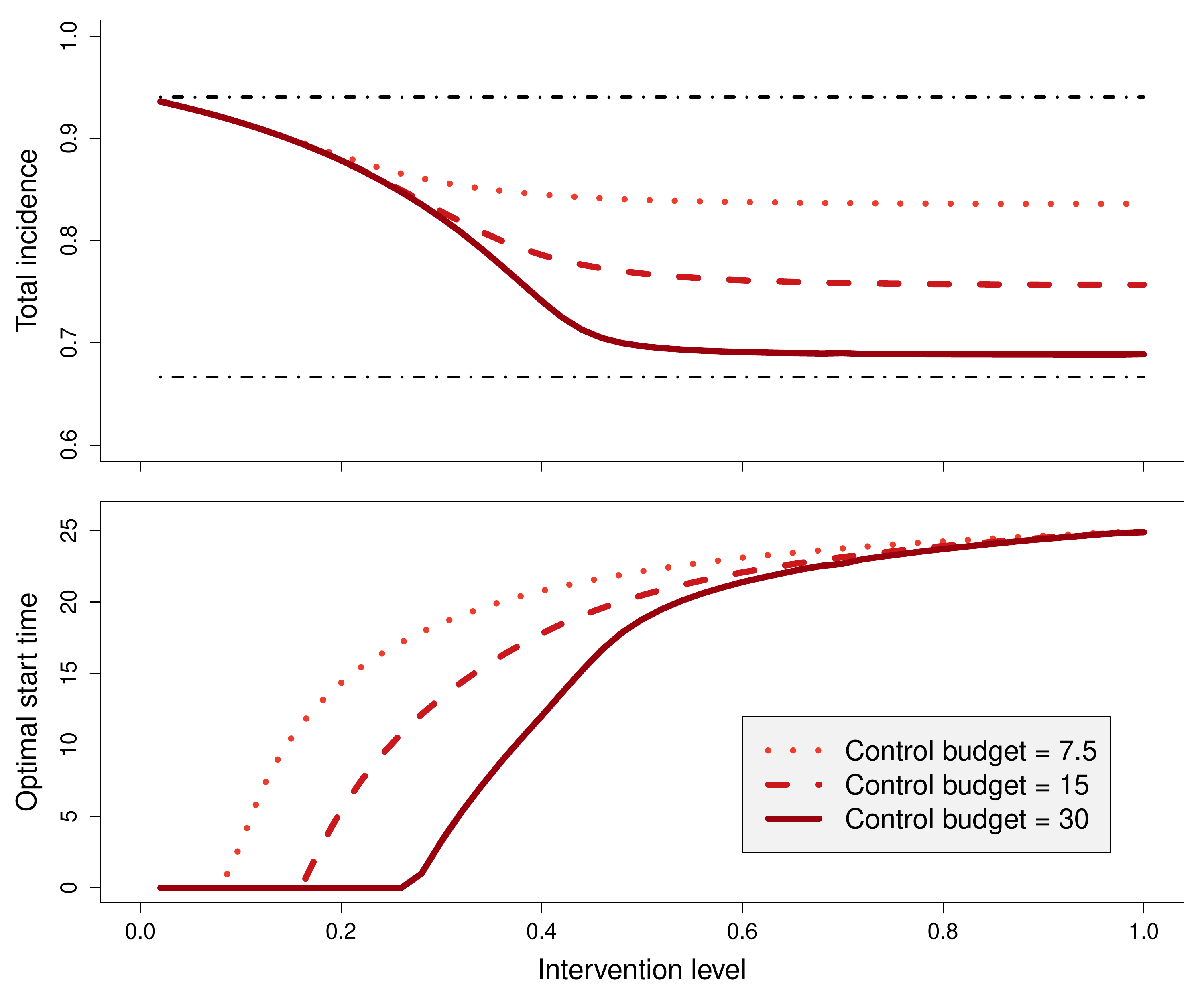}
\caption{Minimum total incidence (top panel) achievable using interventions with total cost $\onenorm{u} \le c_1$ and maximum level $\supnorm{u} \le \cmax$ for 
$c_1=7.5, 15, 30$, and a full range of $\cmax$ (horizontal axis).
The horizontal lines at levels 0.666 and 0.940 indicate the lower and upper bounds of Theorem~\ref{the:Bounds}.
The bottom panel displays the start times of the optimal strategies corresponding to the $(c_1,\cmax)$-pairs.
}
\label{fig:PerformanceLevel}
\end{figure}

\mysubsection{Optimal start time}

Fig.~\ref{fig:PerformanceStartTime} displays the total incidence 
of a 20-day constant-level lockdown having level 0.75 and total cost 15, for different values of the start time, and underlines the importance of proper timing of such interventions. Mistiming the lockdown even by just one week may have a big effect on number of eventually infected individuals. Somewhat strikingly, starting too early is about equally as bad as starting too late. Similar findings have been reported in \cite{Bliman_Duprez_2021,DiLauro_Kiss_Miller_2021}. Proper timing is crucial also when minimizing peak prevalence, as noted in \cite{Morris_Rossine_Plotkin_Levin_2021}.

\begin{figure}[h]
\centering
\setlength{\mywidth}{85mm}
\includegraphics[width=\mywidth]{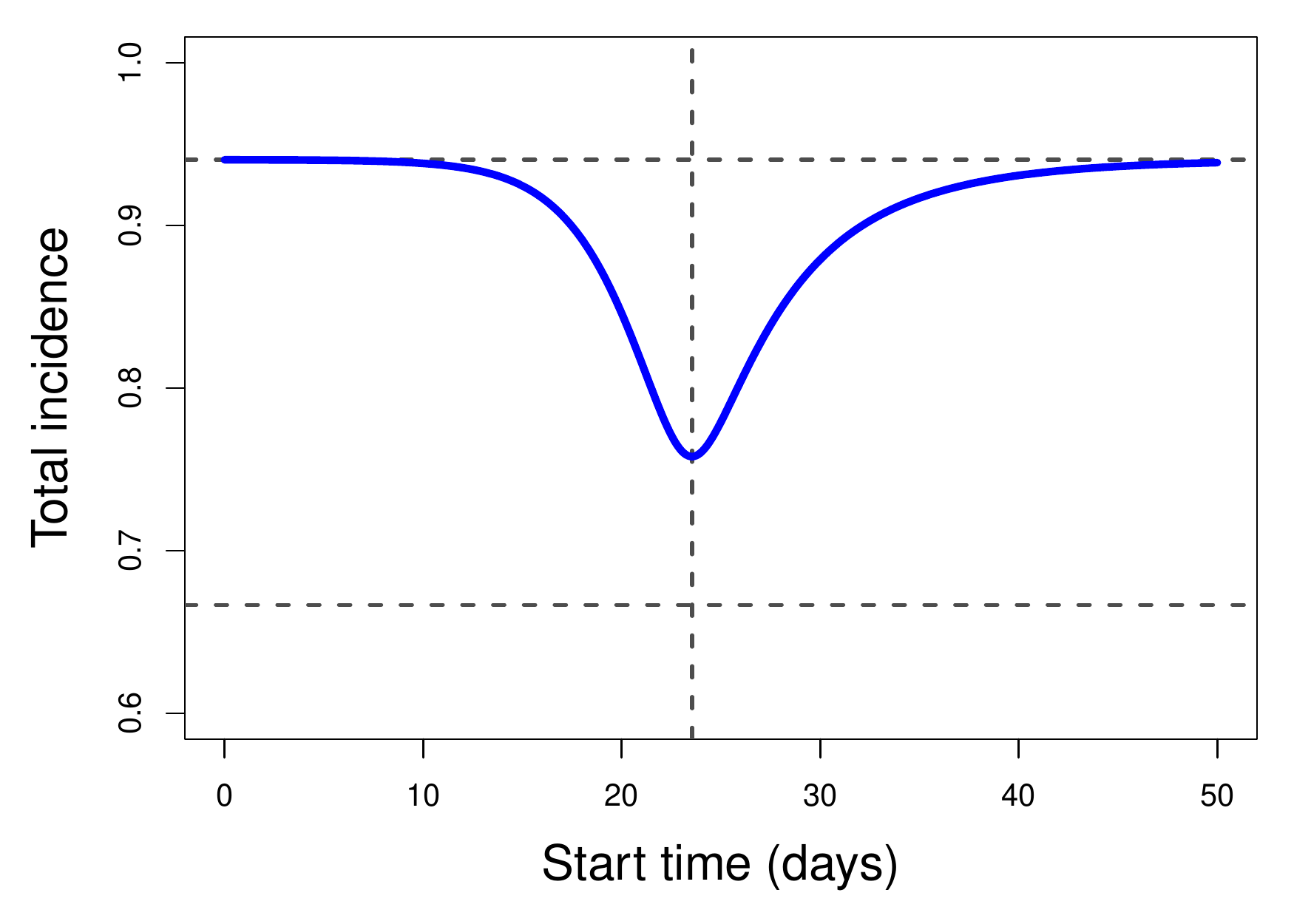}
\caption{
Total incidence of a constant-level lockdown of duration 20 days and level 0.75, for different values of the start time. The optimal start time $t_1=23.6$ yields a total incidence of 0.758. The horizontal lines at levels 0.666 and 0.940 indicate the lower and upper bounds of Theorem~\ref{the:Bounds}.
}
\label{fig:PerformanceStartTime}
\end{figure}

\mysubsection{Lack of monotonicity}

Fig.~\ref{fig:PerformanceStartTimeHighlighted} displays a striking phenomenon where adding restrictions before the start of an optimal intervention leads to more individuals eventually becoming infected. The original intervention (blue) with a duration 20 days and a constant level of 0.75, is started optimally at time 23.6. An alternative intervention (red) has the same level and end time but is started seven days earlier.
Such a modification, aimed at mitigating disease burden by adding more restrictions, actually has an opposite effect and leads to 19.7\% \emph{more} infections in the long run. This negative outcome is caused by a second wave of infections which starts when the alternative intervention is lifted. This again underlines the important of proper timing: if the longer longer 27-day lockdown were started optimally at time 23.4, the total incidence would have been 0.723 which is not much higher than the best possible outcome of $1 - 1/(S(0) \Rnaught) = 0.666$ given in Theorem~\ref{the:Bounds}. 
Similar observations manifesting the counterintuitive lack of monotonicity has been reported in \cite{Chikina_Pegden_2020,Handel_Longini_Antia_2007,Kruse_Strack_2020}.
On a positive note, extending interventions from the end will never do harm in this way (see Lemma \ref{the:Monotonicity} in the appendix).

\begin{figure}[h]
\centering
\setlength{\mywidth}{85mm}
\includegraphics[width=\mywidth]{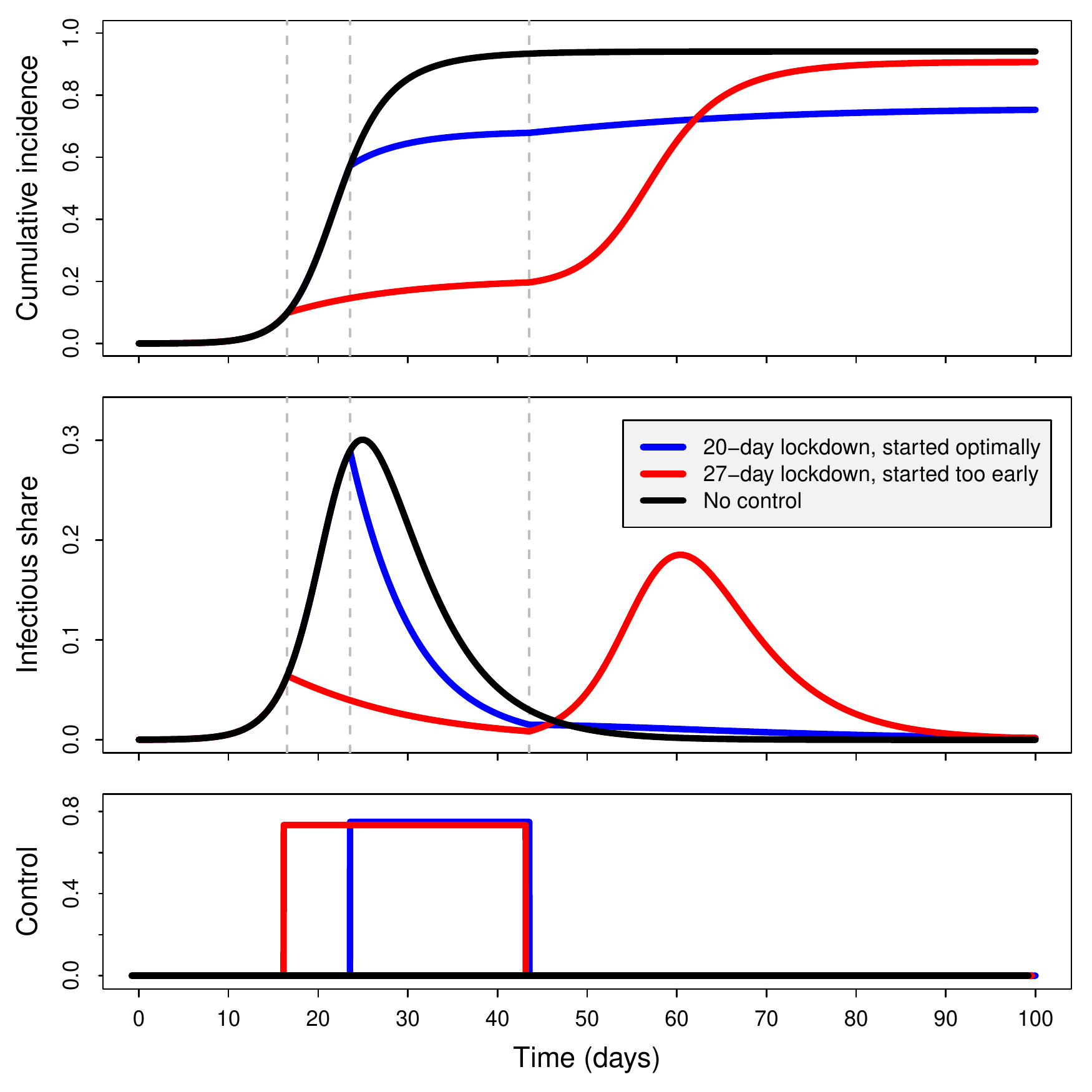}
\caption{Intervening too early may do more harm than good. An optimally timed 20-day lockdown at 0.75 level is in effect during time interval (23.6,43.6) and leads to a total incidence of 0.758 (blue). A longer but badly timed lockdown at the same level is effective during time interval (16.6, 43.6) and leads to a total incidence of 0.907 (red) which is not far from the total incidence of 0.940 corresponding to an uncontrolled epidemic (black). For the longer 27-day lockdown, the optimal start time is 23.4 and yields total incidence 0.723.
}
\label{fig:PerformanceStartTimeHighlighted}
\end{figure}

\mysubsection{Comparison with peak-minimizing strategy}

Fig.~\ref{fig:PerformanceVsMSW} displays time plots of a constant-level lockdown of type \eqref{eq:WaitSuppressRelax} designed to minimize total incidence and an intervention of type \eqref{eq:WaitMaintainRelax} designed to minimize peak prevalence. 
Both intervention strategies are constrained by a total budget $c_1=15$ and maximum intervention level $\cmax = 0.75$. The latter strategy imposes restrictions earlier, and when these are lifted, the share of infectious individuals in the population is still relatively high, leading to more additional infections and a higher total incidence. As expected, the former strategy yields a lower total incidence than the latter, but at the cost of higher peak prevalence.

\begin{figure}[h]
\centering
\setlength{\mywidth}{85mm}
\includegraphics[width=\mywidth]{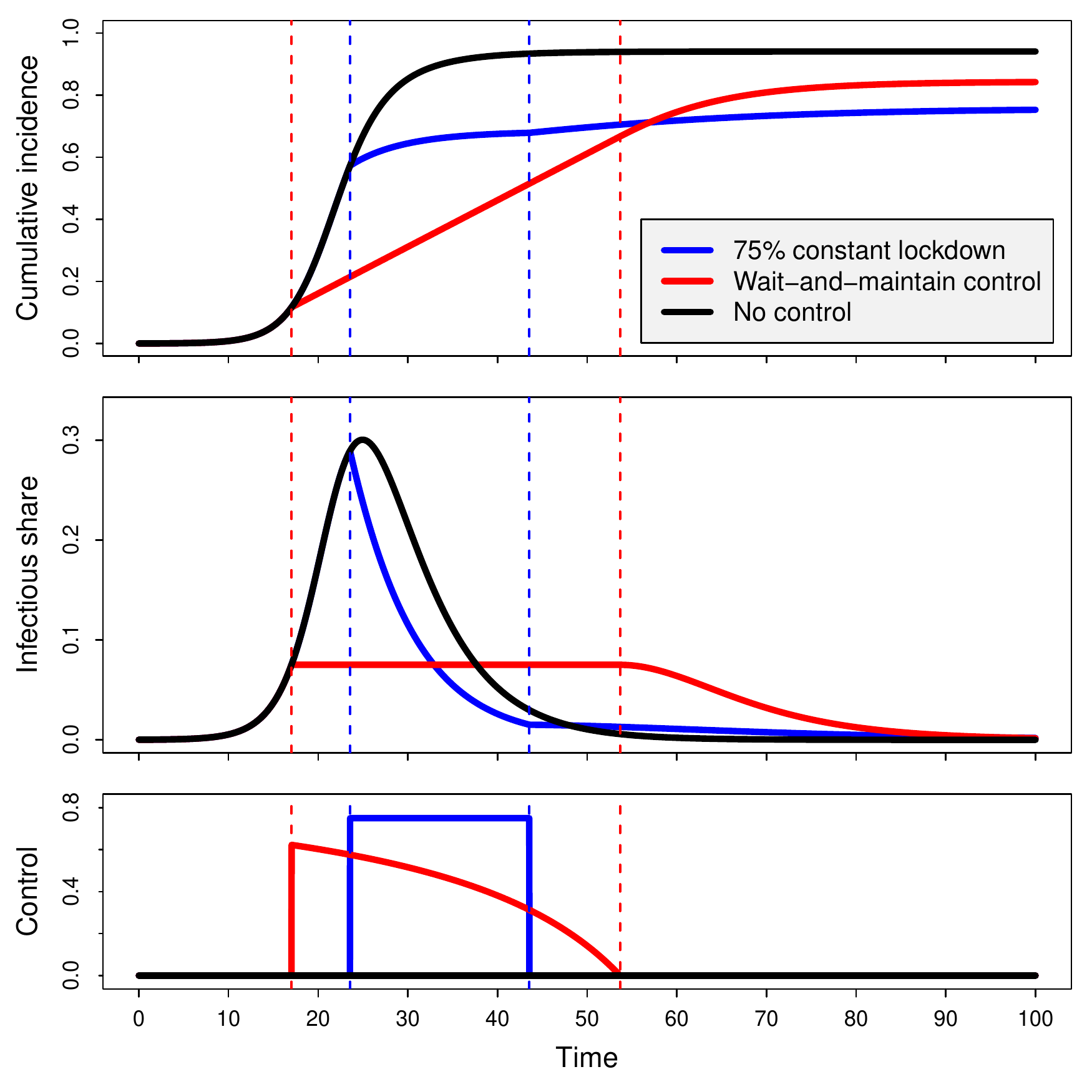}
\caption{Time plots of
cumulative incidence $1 - S(t)/S(0)$ (top),
prevalence $I(t)$ (middle), and intervention level $u(t)$ (bottom) for three strategies:
(a) Total-incidence-minimizing single lockdown of 20 days at 75\% level with $\onenorm{u}=15$, and optimized start time = 23.6 (blue).
(b) Peak-prevalence-minimizing
strategy \eqref{eq:WaitMaintainRelax} 
with start time 17.0 and duration 36.6 corresponding to $\onenorm{u}=15$ (red).
(c) No control (black).
The total incidence values for the three strategies are 0.758, 0.843, and 0.940. The corresponding peak prevalence values are 0.289, 0.075, and 0.300.
}
\label{fig:PerformanceVsMSW}
\end{figure}

\mysection{Discussion}

It was proven that, among all intervention strategies with cumulative cost not exceeding $c_1$ and magnitude never exceeding $\cmax$,
a single lockdown at level $\cmax$, duration $c_1/\cmax$ started at an optimal time instant minimizes the total number of individuals eventually getting infected. As a consequence, assuming that a fixed proportion of infected individuals end up in hospitals and another fixed proportion will die, this strategy also minimizes the cumulative number of hospitalizations and case fatalities. We also saw that the total incidence is at least $1-1/(S(0) \Rnaught)$ for any intervention with a finite total cost. This reflects the fact that in the absence of future vaccinations, the prevalence would start rising again if the susceptible share would be above the herd immunity level $1-1/\Rnaught$ at a time when interventions are relaxed.

Numerically it was seen that milder restrictions for a longer time had very little effect on the total incidence, and also, somewhat surprisingly, that restrictions imposed very early are not as effective as waiting until the prevalence has grown substantially before inserting maximal prevention. Rather counterintuitively, adding restrictions prior to the beginning of an existing intervention strategy may even result in a \emph{larger} total incidence. On the contrary, as a by-product of our main proof, it was proven that adding restrictions \emph{after} an intervention strategy has ended can only reduce the total incidence.

The optimization problem is formulated without having any fixed time horizon in mind. If for example a vaccine was known to become available not too far into the future, then this could lead to a rather different optimal prevention. Another assumption was that we were only willing to spend a finite cost $c_1$ for the cumulative preventions. For a severe disease this might not be the case --- we might be willing to keep some restrictive level for a very long time. In such a case the optimal solution will also be different.

A common feature of epidemic models based on deterministic differential equations is that the fraction of infected individuals never exactly reaches zero. In real epidemics, this might happen due to stochastic finite-population effects. In such cases, a feasible intervention strategy might be to aim for an early elimination of the disease by imposing massive restrictions early on, as has been advocated with SARS-CoV-2 in certain countries. The performance of such strategies cannot be analysed using the type of deterministic models considered here.

The class of interventions considered was assumed to have a bounded linear cost integrated over time. It is not obvious that the societal cost of preventive measures act linearly on the amount of prevention, and therefore nonlinear cost functionals might be relevant to consider. Similarly, changing an existing intervention level might incur additional societal costs, especially if such changes are carried out frequently. The analysis of optimal intervention strategies under such cost functions remains an open problem worthy of attention.

Other extensions worth considering would be to make the underlying epidemic model more realistic by incorporating seasonal effects, adding a latency period between which individuals have been infected but are not yet infectious, including a delay for the time it takes for an intervention decision to take effect, acknowledging different types of individuals and social structures such as households and workplaces, and relaxing the assumption that individuals recover at a constant rate. However, we do not expect major qualitative changes from these extensions, as opposed to considering nonlinear cost functions or unlimited intervention budgets.

\mysection{Proofs}

\mysubsection{Proof of Theorem~\ref{the:Main}}
\label{sec:Main}

Denote by $U(c_1, \cmax)$ the set of piecewise continuous functions $u:[0,\infty) \to [0,1]$
such that $\onenorm{u} \le c_1$ and $\supnorm{u} \le \cmax$.  
Denote by $J(u)$ the total incidence corresponding to intervention strategy $u \in U(c_1, \cmax)$.
Denote
\[
 J_* \weq \inf_{u \in U(c_1, \cmax)} J(u).
\]
Intuition suggests that interventions carried out in a distant future should have a negligible effect on the evolution of the epidemic.  Proposition~\ref{the:UI} in the appendix confirms this and tells that it is possible to select constants $C, \alpha, T_* > 0$ such that 
\begin{equation}
 \label{eq:Tail.I}
 \int_T^\infty I_u(t) \, dt
 \wle C e^{- \alpha T}
\end{equation}
for all $T \ge T_*$ and all controls bounded by $\onenorm{u} \le c_1$.
Let us now fix $\epsilon > 0$ and choose $u_1 \in U(c_1, \cmax)$ such that
\begin{equation}
 \label{eq:Ju0}
 J(u_1) \le J_* + \epsilon.
\end{equation}

(i) \emph{Truncation}. We will approximate $u_1$ by a control $u_2 = 1_{[0,T]} u_1$ of finite duration, where we choose a large enough $T > T_*$ so that $C e^{-\alpha T} \le \epsilon$. Because the trajectories $(S_{u_1}, I_{u_1})$ and $(S_{u_2}, I_{u_2})$ coincide up to time $T$, we see that
\[
 | J(u_2) - J(u_1) |
 \weq \frac{\gamma}{S(0)}  \left| \int_T^\infty I_{u_2}(t) \, dt - \int_T^\infty I_{u_1}(t) \, dt \right|
\]
Because $\onenorm{u_2 } \le \onenorm{u_1} \le c_1$, inequality \eqref{eq:Tail.I} tells that both integrals on the right are at most $C e^{-\alpha T}$, and it follows that
\begin{equation}
 \label{eq:Ju1}
 | J(u_2) - J(u_1) |
 \wle 2 \epsilon \frac{\gamma}{S(0)}.
\end{equation}

(ii) \emph{Quantization}. We will next approximate $u_2$ by a bang--bang control $u_3$ defined as follows.  We divide time into small intervals $I_k = ( (k-1)h, k h]$ of length $0 < h \le \epsilon e^{-(\beta+\gamma) T}$.  We quantize the control function $u_2$ in a frequency modulated fashion so that in each time $I_k$, first $u_3=0$ for $h-\tau_k$ time units, and then $u_3 = \cmax$ for the remaining $\tau_k$ time units, where 
\[
 \tau_k
  \weq b^{-1} \int_{h k - h}^{h k} u_2(t) \, dt
\]
is selected so that $\int_{I_k} u_3(t) \, dt = \int_{I_k} u_2(t) \, dt$, see Figure~\ref{fig:Quantization}. As a consequence of $h \le \epsilon e^{-(\beta+\gamma) T}$, it follows (details in Proposition~\ref{the:QuantizationFinal}) that
\begin{align*}
 \int_0^T \abs{I_{u_3}(t)-I_{u_2}(t)} \, dt
 &\wle \int_0^T 3 \beta h e^{(\beta+\gamma) t} \, dt \\
 &\wle 3 \beta h (\beta+\gamma)^{-1} e^{(\beta+\gamma) T}
 \wle 3 \epsilon.
\end{align*}
Furthermore,
\[
 \int_T^\infty \abs{I_{u_3}(t)-I_{u_2}(t)} \, dt
 \wle \int_T^\infty I_{u_2}(t) \, dt + \int_T^\infty I_{u_3}(t) \, dt,
\]
and by noting that $\onenorm{u_3} = \onenorm{u_2} \le c_1$ (Lemma~\ref{the:SlottingBangBang}),  inequality \eqref{eq:Tail.I} again guarantees that both integrals on the right are at most 
$C e^{-\alpha T} \le \epsilon$. Hence it follows that
\begin{equation}
 \label{eq:Ju2}
 | J(u_3) - J(u_2) |
 \wle 5 \epsilon \frac{\gamma}{S(0)}.
\end{equation}

\begin{figure}[h]
\centering
\includegraphics[width=80mm]{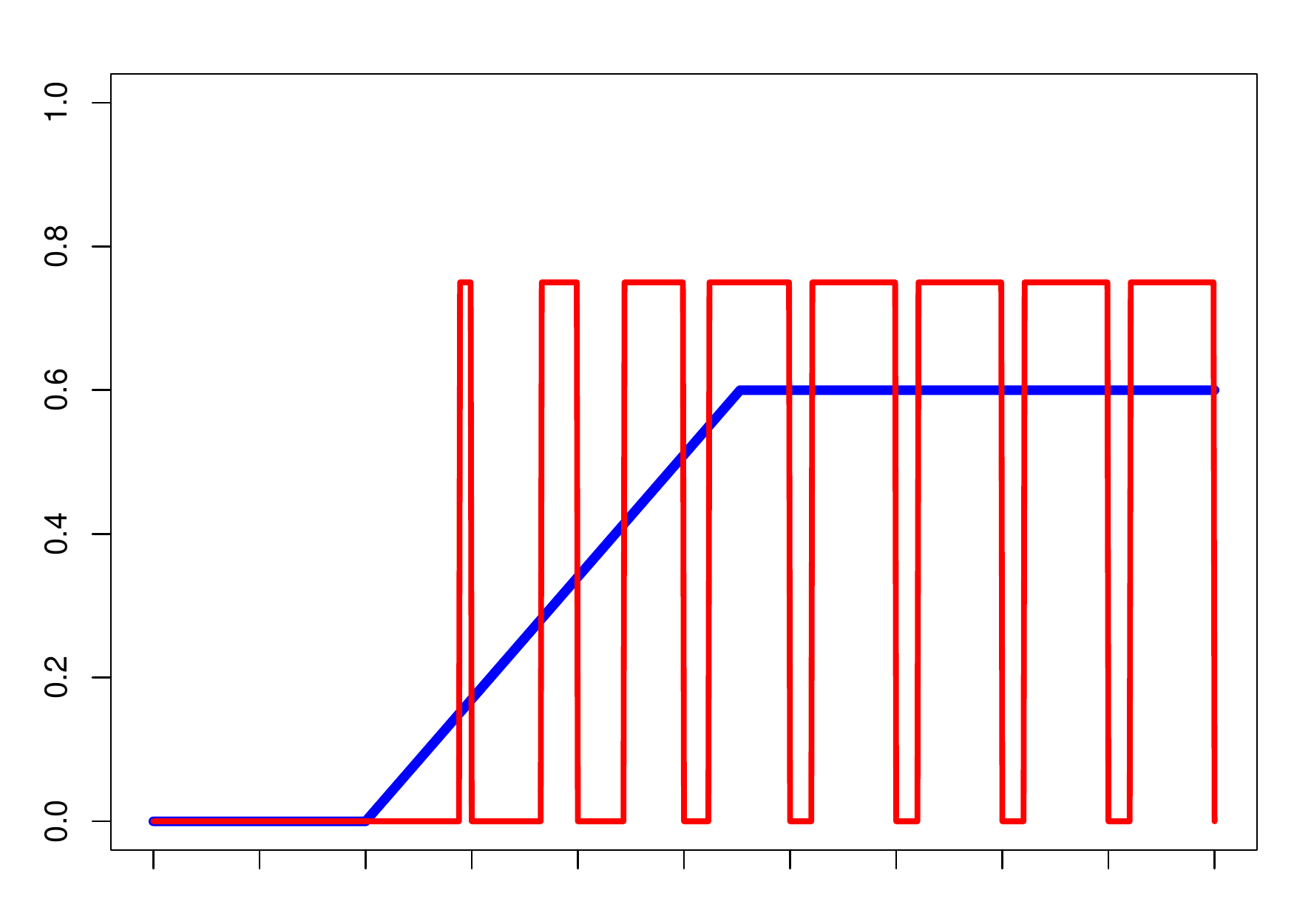}
\caption{\label{fig:Quantization} Quantization of a function $u$ (blue) by frequency modulated function $\hu$ with amplitude 0.75 (red).}
\end{figure}

(iii) \emph{Prolongation}. By construction, $u_3 = \cmax 1_A$ where $A$ is a finite union of intervals of total length $\abs{A}$. Because $\onenorm{u_3} = \cmax \abs{A}$ and $\onenorm{u_3} = \onenorm{u_2} \le c_1$, we find that $\abs{A} \le c_1/\cmax$.  Intuition suggests that a smaller disease burden will be incurred by replacing $A$ by a larger subset of the time axis, but such monotonicity properties are known to fail in general (see for example \cite{Chikina_Pegden_2020,Handel_Longini_Antia_2007}). Nevertheless, prolonging the {last} interval of $A$ from the end does increase the final susceptible share (details in Lemma~\ref{the:Monotonicity}).  Let us define $u_4 = \cmax 1_{\tilde A}$ where $\tilde A$ is obtained by prolonging the last interval in $A$ from the end so that $\abs{\tilde A} = c_1/\cmax$. Then it follows that $S_{u_4}(\infty) \ge S_{u_3}(\infty)$, and therefore, by recalling the equation $ S(0) + I(0) - S_u(\infty) \weq \gamma \onenorm{I_u}
$,
we find that
\begin{equation}
 \label{eq:Ju3}
 J(u_4) \wle J(u_3).
\end{equation}

(iv) \emph{Merging}. The control $u_4$ belongs to the set $\tilde U$ of all controls of the form $u = \cmax 1_B$ where $B \subset [0,\infty)$ is a union of finitely many disjoint intervals of total length $c_1/\cmax$. Among those, the best for minimizing cumulative incidence are those where all disjoint intervals are merged into a single interval of length $c_1/\cmax$. Especially, it is known \cite[Theorem 1.1]{Feng_Iyer_Li_2021} that there exists $\sigma \ge 0$ such that the control $u_5 = \cmax 1_{(\sigma, \sigma + c_1/\cmax)}$ satisfies $J(u_5) \weq \inf_{u \in \tilde U} J(u)$.  Especially, 
\begin{equation}
 \label{eq:Ju4}
 J(u_5) \wle J(u_4).
\end{equation}
By collecting the inequalities \eqref{eq:Ju0}--\eqref{eq:Ju4} together, we conclude that
\[
 J(u_5) \wle J_* + 8 \epsilon \frac{\gamma}{S(0)}.
\]
To summarize, for any $\epsilon > 0$ there exists $\sigma \ge 0$ such that the single-lockdown control $u_5 = \cmax 1_{(\sigma, \sigma + c_1/\cmax)}$ satisfies the above inequality. We conclude that
\begin{equation}
 \label{eq:Infimum1LD}
 \inf_{u \in U(c_1, \cmax)} J(u)
 \weq \inf_{\sigma \ge 0} J(v_\sigma),
\end{equation}
where $v_\sigma = \cmax 1_{(\sigma, \sigma + c_1/\cmax)}$ denotes a single-lockdown control with start time $\sigma$, duration $c_1/\cmax$, and constant intervention level $\cmax$.

(v) \emph{Timing}. Finally, we will verify that the infimum in \eqref{eq:Infimum1LD} is attained by an optimally chosen start time $\sigma_* \ge 0$.  Denote $\hat U = \cup_{\sigma \ge 0} \hat U_\sigma$ where $\hat U_\sigma$ is the set of controls such that $\supnorm{u} \le \cmax$ and $u=0$ outside the time interval $[\sigma, \sigma+c_1/\cmax]$. Then \cite[Theorem 1]{Bliman_Duprez_2021} implies
that there exists a unique $\sigma_* \ge 0$ such that
\[
 \sup_{u \in \hat U} S_u(\infty)
 \weq \sup_{\sigma \ge 0} \sup_{u \in \hat U_\sigma} S_u(\infty)
 \weq S_{v_{\sigma_*}}(\infty)
\]
for $v_{\sigma_*} = \cmax 1_{(\sigma, \sigma + c_1/\cmax)}$. Therefore,
by recalling the equation $ S(0) + I(0) - S_u(\infty) \weq \gamma \onenorm{I_u}.
$, 
it follows that 
\[
 \inf_{u \in \hat U} J(u)
 \weq J(v_{\sigma_*}).
\]
Because $\hat U$ contains all controls of the form $v_\sigma = \cmax 1_{(\sigma, \sigma + c_1/\cmax)}$, we conclude that
\[
 J(v_{\sigma_*})
 \weq \inf_{\sigma \ge 0} J(v_{\sigma}).
\]
In light of \eqref{eq:Infimum1LD}, this confirms the statement of the theorem.
\qed

\mysubsection{Proof of Theorem~\ref{the:Bounds}}

For a standard SIR model with no interventions ($u=0$), it is well known that the limiting susceptible share is bounded by $S(\infty) \le \fgb$. By a careful reasoning (Proposition~\ref{the:Time2HerdImmunity} in the appendix), the same bound extends to epidemics controlled by an intervention with $\onenorm{u} < \infty$. Therefore, the total incidence is bounded from below by
$1-S(\infty)/S(0) \ge 1-\gamma/(\beta S(0))$.

For the upper bound, it appears intuitively clear that the limiting susceptible share $S_u(\infty)$ in an epidemic with an arbitrary control $u$ is larger than equal than the corresponding quantity $S_0(\infty)$ in an epidemic with no interventions. Indeed, Chikina and Pegden \cite{Chikina_Pegden_2020} have shown that a pointwise ordering of interventions $u_1 \le u_2$ implies $S_{u_1}(\infty) \le S_{u_2}(\infty)$ under an extra assumption that $u_2$ is nondecreasing. Straightforward modifications of the analysis in \cite{Chikina_Pegden_2020} show that the above implication holds also when $u_1$ instead of $u_2$ is required to be nondecreasing. By selecting $u_1=0$ and letting $u_2=u$ be an arbitrary piecewise continuous intervention, we find that
$S_0(\infty) \le S_u(\infty)$, and we conclude that the total incidence is bounded from above by
$1-S_u(\infty)/S(0)
\le 1-S_0(\infty)/S(0)$.
\qed

\ifarxiv
\paragraph{Acknowledgments}
The authors are grateful for financial support from project 105572 NordicMathCovid as part of the Nordic Programme on Health and Welfare funded by NordForsk.
\fi
\ifpnas
\acknow{The authors are grateful for financial support from project 105572 NordicMathCovid as part of the Nordic Programme on Health and Welfare funded by NordForsk.}
\showacknow{}
\fi

 \ifarxiv
\bibliographystyle{unsrt}
\fi
\bibliography{lslReferences}

\newcommand{\SortNoop}[1]{}\def\cprime{$'$}
\begin{thebibliography}{10}

\bibitem{Kermack_McKendrick_1927}
William~Ogilvy Kermack and A.~G. McKendrick.
\newblock A contribution to the mathematical theory of epidemics.
\newblock {\em Proceedings of the Royal Society of London A},
  115(772):700--721, 1927.

\bibitem{Diekmann_Heesterbeek_Britton_2013}
Odo Diekmann, Hans Heesterbeek, and Tom Britton.
\newblock {\em Mathematical Tools for Understanding Infectious Disease
  Dynamics}.
\newblock Princeton University Press, 2013.

\bibitem{Morris_Rossine_Plotkin_Levin_2021}
Dylan~H. Morris, Fernando~W. Rossine, Joshua~B. Plotkin, and Simon~A. Levin.
\newblock Optimal, near-optimal, and robust epidemic control.
\newblock {\em Communications Physics}, 4(1):78, 2021.

\bibitem{Greene_Sontag_2021}
James~M. Greene and Eduardo~D. Sontag.
\newblock Minimizing the infected peak utilizing a single lockdown: a technical
  result regarding equal peaks.
\newblock medRxiv, 2021.

\bibitem{Miclo_Spiro_Weibull_2020}
Laurent Miclo, Daniel Spiro, and J{\"o}rgen Weibull.
\newblock Optimal epidemic suppression under an {ICU} constraint.
\newblock arXiv, 2020.

\bibitem{Avram_Freddi_Goreac_2022}
Florin Avram, Lorenzo Freddi, and Dan Goreac.
\newblock Optimal control of a {SIR} epidemic with {ICU} constraints and target
  objectives.
\newblock {\em Applied Mathematics and Computation}, 418:126816, 2022.

\bibitem{Feng_Iyer_Li_2021}
Yuanyuan Feng, Gautam Iyer, and Lei Li.
\newblock Scheduling fixed length quarantines to minimize the total number of
  fatalities during an epidemic.
\newblock {\em Journal of Mathematical Biology}, 82(7):69, 2021.

\bibitem{Bliman_Duprez_Privat_Vauchelet_2021}
Pierre-Alexandre Bliman, Michel Duprez, Yannick Privat, and Nicolas Vauchelet.
\newblock Optimal immunity control and final size minimization by social
  distancing for the {SIR} epidemic model.
\newblock {\em Journal of Optimization Theory and Applications},
  189(2):408--436, 2021.

\bibitem{Ketcheson_2021}
David~I. Ketcheson.
\newblock Optimal control of an sir epidemic through finite-time
  non-pharmaceutical intervention.
\newblock {\em Journal of Mathematical Biology}, 83(1), Jun 2021.

\bibitem{Bliman_Duprez_2021}
Pierre-Alexandre Bliman and Michel Duprez.
\newblock How best can finite-time social distancing reduce epidemic final
  size?
\newblock {\em Journal of Theoretical Biology}, 511:110557, 2021.

\bibitem{Cianfanelli_etal_2022}
Leonardo Cianfanelli, Francesca Parise, Daron Acemoglu, Giacomo Como, and
  Asuman Ozdaglar.
\newblock Lockdown interventions in {SIR} model: {Is} the reproduction number
  the right control variable?, 2021.
\newblock arXiv.

\bibitem{Flaxman_etal_2020}
Seth Flaxman, Swapnil Mishra, Axel Gandy, H.~Juliette~T. Unwin, Thomas~A.
  Mellan, Helen Coupland, Charles Whittaker, Harrison Zhu, Tresnia Berah,
  Jeffrey~W. Eaton, M{\'e}lodie Monod, Pablo~N. Perez-Guzman, Nora Schmit,
  Lucia Cilloni, Kylie E.~C. Ainslie, Marc Baguelin, Adhiratha Boonyasiri,
  Olivia Boyd, Lorenzo Cattarino, Laura~V. Cooper, Zulma Cucunub{\'a}, Gina
  Cuomo-Dannenburg, Amy Dighe, Bimandra Djaafara, Ilaria Dorigatti, Sabine~L.
  van Elsland, Richard~G. FitzJohn, Katy A.~M. Gaythorpe, Lily Geidelberg,
  Nicholas~C. Grassly, William~D. Green, Timothy Hallett, Arran Hamlet, Wes
  Hinsley, Ben Jeffrey, Edward Knock, Daniel~J. Laydon, Gemma Nedjati-Gilani,
  Pierre Nouvellet, Kris~V. Parag, Igor Siveroni, Hayley~A. Thompson, Robert
  Verity, Erik Volz, Caroline~E. Walters, Haowei Wang, Yuanrong Wang, Oliver~J.
  Watson, Peter Winskill, Xiaoyue Xi, Patrick G.~T. Walker, Azra~C. Ghani,
  Christl~A. Donnelly, Steven Riley, Michaela A.~C. Vollmer, Neil~M. Ferguson,
  Lucy~C. Okell, Samir Bhatt, and Imperial College COVID-19~Response Team.
\newblock Estimating the effects of non-pharmaceutical interventions on
  {COVID-19} in {Europe}.
\newblock {\em Nature}, 584(7820):257--261, 2020.

\bibitem{DiLauro_Kiss_Miller_2021}
Francesco Di~Lauro, Istv{\'a}n~Z. Kiss, and Joel~C. Miller.
\newblock Optimal timing of one-shot interventions for epidemic control.
\newblock {\em PLOS Computational Biology}, 17(3):1--25, 03 2021.

\bibitem{Chikina_Pegden_2020}
Maria Chikina and Wesley Pegden.
\newblock Failure of monotonicity in epidemic models, 2020.

\bibitem{Handel_Longini_Antia_2007}
Andreas Handel, Ira~M Longini, and Rustom Antia.
\newblock What is the best control strategy for multiple infectious disease
  outbreaks?
\newblock {\em Proceedings of the Royal Society B: Biological Sciences},
  274(1611):833--837, 2007.

\bibitem{Kruse_Strack_2020}
Thomas Kruse and Philipp Strack.
\newblock Optimal control of an epidemic through social distancing.
\newblock SSRN Preprint, 2020.

\bibitem{Khalil_1995}
Hassan~K. Khalil.
\newblock {\em Nonlinear Systems}.
\newblock Prentice Hall, 1995.

\end{thebibliography}

\clearpage 
\appendix

\section{Truncation}
\label{sec:Truncation}

The appendices present technical details needed for proving Theorem~\ref{the:Main}.
Appendix~\ref{sec:Truncation} contains details related to approximating a control of infinite time horizon by truncation.
Appendix~\ref{sec:Quantization} contains an analysis of a frequency-modulated quantization operator.
Appendix~\ref{sec:Prolongation} contains a monotonicity property related to minimizing the disease burden by prolonged interventions.

\subsection{Time to reach herd immunity}

For an epidemic trajectory $(S_u, I_u)$ controlled by $u$, we denote the time at which herd immunity is reached by
\[
 t_H(u)
 \weq \inf\left\{t \ge 0: S_u(t) \le \fgb \right\}.
\]
The following result generalizes \cite[Lemma 3.3:(2)]{Feng_Iyer_Li_2021} (and corrects a minor mistake in its proof).

\begin{proposition}
\label{the:Time2HerdImmunity}
For any initial state with $S(0) > \fgb$ and $I(0)>0$, and any piecewise continuous
control such that $\norm{u}_1 < \infty$, the time to reach herd immunity is finite and bounded by
\[
 t_H(u)
 \wle \norm{u}_1 + \frac{\log(\fbg S(0))}{\beta I(0)} e^{\gamma \norm{u}_1}.
\]
\end{proposition}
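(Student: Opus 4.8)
The plan is to control the susceptible share through its logarithmic derivative and to combine this with a lower bound on prevalence that is valid precisely up until herd immunity is reached. Write $\tau_H = t_H(u)$ and work on the interval $[0,\tau_H)$, on which $S_u(t) > \fgb$ by definition. On this interval I would first show that $I_u$ cannot decay too quickly, and then feed that estimate into the equation for $\log S_u$ to force $S_u$ down to the threshold $\fgb$ within the claimed time. Throughout I use that the control takes values in $[0,1]$, so that $1-u \ge 0$, and that $I_u > 0$ (which is immediate from the integrating-factor representation of the linear equation for $I_u$ given $S_u$ and $u$).

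For the first step, note that on $[0,\tau_H)$ we have $\beta S_u > \gamma$, so multiplying by $1-u \ge 0$ gives $(1-u)\beta S_u \ge (1-u)\gamma$, and hence
\[
 I_u'
 \weq I_u\big[(1-u)\beta S_u - \gamma\big]
 \wge I_u\big[(1-u)\gamma - \gamma\big]
 \weq -\gamma u\, I_u .
\]
Dividing by $I_u$ and integrating from $0$ to $t$ yields $\log I_u(t) - \log I(0) \ge -\gamma \int_0^t u\, ds \ge -\gamma \onenorm{u}$, so that $I_u(t) \ge I(0)\, e^{-\gamma \onenorm{u}}$ for every $t \in [0,\tau_H)$. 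This is the crux of the argument: working strictly before herd immunity is what converts the recovery term $-\gamma$ into something dominated by $-\gamma u$, i.e.\ by the control alone.

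Next I would insert this into the identity $(\log S_u)' = -(1-u)\beta I_u$. Integrating and using $\int_0^t (1-u)\, ds \ge t - \onenorm{u}$ gives, for every $t \in [0,\tau_H)$,
\[
 \log\frac{S(0)}{S_u(t)}
 \weq \beta \int_0^t (1-u)\, I_u \, ds
 \wge \beta I(0)\, e^{-\gamma \onenorm{u}} \int_0^t (1-u)\, ds
 \wge \beta I(0)\, e^{-\gamma \onenorm{u}}\,\big(t - \onenorm{u}\big).
\]
On the other hand, $S_u(t) > \fgb$ on this interval forces $\log(S(0)/S_u(t)) < \log(\fbg S(0))$, a fixed finite positive number since $S(0) > \fgb$. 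Comparing the two bounds gives $t - \onenorm{u} < \tfrac{\log(\fbg S(0))}{\beta I(0)}\, e^{\gamma \onenorm{u}}$ for every $t < \tau_H$.

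Finally, if $\tau_H$ were infinite this inequality would hold for all $t \ge 0$, yet its left-hand side tends to $+\infty$ while the right-hand side is constant; this contradiction shows $\tau_H < \infty$. Letting $t \uparrow \tau_H$ and using continuity of $S_u$ then delivers $\tau_H - \onenorm{u} \le \tfrac{\log(\fbg S(0))}{\beta I(0)}\, e^{\gamma \onenorm{u}}$, which is exactly the asserted bound. I expect the only delicate point to be the sign manipulation in the first step; once the prevalence lower bound is secured, the remainder is elementary integration together with the limiting argument for finiteness.
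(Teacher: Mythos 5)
Your proof is correct and follows essentially the same route as the paper's: a prevalence lower bound $I_u(t) \ge I(0)e^{-\gamma \onenorm{u}}$ obtained from $(\log I_u)' \ge -\gamma u$ before herd immunity, fed into $(\log S_u)' = -\beta(1-u)I_u$ together with $\int_0^t (1-u)\,ds \ge t - \onenorm{u}$. If anything, your limiting/contradiction argument handles the finiteness of $t_H(u)$ slightly more explicitly than the paper, which works directly on $[0,t_H]$ and invokes $S_u(t_H)=\fgb$ by continuity.
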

\begin{proof}
We will analyse the system on the time interval up to $t_H = t_H(u)$. Observe that the logarithmic state variables evolve according to
\begin{align}
 \label{eq:LogS}
 (\log S_u)' &\weq -\beta (1-u) I_u, \\
 \label{eq:LogI}
 (\log I_u)' &\weq \beta (1-u) S_u - \gamma.
\end{align}
By definition, $S_u \ge \fgb$ on $[0,t_H]$. This lower bound combined with \eqref{eq:LogI} implies that $(\log I_u)' \ge -\gamma u$, and therefore
\begin{equation}
 \label{eq:iLower}
 I_u(t)
 \wge I(0) e^{-\gamma \int_0^t u(s) ds}
 \wge I(0) e^{-\gamma \onenorm{u}}
\end{equation}
on $[0,t_H]$. By denoting $I_{\rm min} = I(0) e^{-\gamma \onenorm{u}}$ and combining the above inequality with \eqref{eq:LogS}, it follows that $(\log S_u)' \le -\beta I_{\rm min} (1-u)$ on $[0,t_H]$, so that
\begin{align*}
 \log S_u(t_H) - \log S(0)
 &\wle - \beta I_{\rm min} \int_0^{t_H} (1-u(t)) \, dt \\
 &\wle \beta I_{\rm min} ( \onenorm{u} - t_H).
\end{align*}
By noting that $S_u(t_H) = \fgb$ by the continuity of $S_u$, we see that
\[
 t_H
 \wle \onenorm{u} + \frac{\log S(0) - \log \fgb}{\beta I_{\rm min}},
\]
and the claim follows.
\end{proof}

\subsection{Uniform integrability}

The following result shows that the collection of infectious trajectories induced by controls bounded by $\norm{u}_1 \le c_1$ is uniformly integrable.

\begin{proposition}
\label{the:UI}
For any $\beta,\gamma>0$, any initial state with $S(0) > 0$ and $I(0)>0$, and 
any $c_1 \ge 0$, there exist constants $\alpha, C, T_* > 0$ such that
\[
 \sup_{ \onenorm{u} \le c_1} \int_T^\infty I_u(t) \, dt
 \wle C e^{- \alpha T}
 \qquad \text{for all $T \ge T_*$}.
\]
\end{proposition}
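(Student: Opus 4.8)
The plan is to upgrade the uniform bound on the herd-immunity time from Proposition~\ref{the:Time2HerdImmunity} into a uniform exponential decay estimate for $I_u$ itself. Since $S_u$ is nonincreasing and $\onenorm{u} \le c_1$, Proposition~\ref{the:Time2HerdImmunity} furnishes a single time $T_H$, depending only on $\beta,\gamma,S(0),I(0),c_1$, such that $S_u(t) \le \fgb$ for all $t \ge T_H$ and all admissible $u$. On this region $(1-u)\beta S_u \le \beta S_u \le \gamma$, so $(\log I_u)' \le 0$ and $I_u$ is nonincreasing. The heart of the matter is to improve the inequality $S_u \le \fgb$ to a strict gap $S_u(t) \le \fgb - \delta$ holding from some uniform time onward: once this is available, $(\log I_u)' \le \beta S_u - \gamma \le -\beta\delta$ gives $I_u(t) \le I_u(T_*) e^{-\beta\delta(t-T_*)}$, and integrating the tail yields $\int_T^\infty I_u \, dt \le \frac{1}{\beta\delta} e^{-\beta\delta(T-T_*)}$, which is the claimed bound with $\alpha = \beta\delta$.

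So the main obstacle is producing the \emph{uniform} gap, the difficulty being that a control may spend its budget to hold $S_u$ near the threshold $\fgb$ for a long time. I would control this using the budget constraint together with a uniform lower bound on prevalence at the herd-immunity time $t_H = t_H(u) \le T_H$. On $[0,t_H]$ one has $S_u \ge \fgb$, hence $(\log I_u)' \ge -\gamma u$ and, by exactly inequality \eqref{eq:iLower}, $I_u(t_H) \ge I(0) e^{-\gamma \onenorm{u}} \ge I(0) e^{-\gamma c_1} =: I_{\min}$. For $t \ge t_H$ we also have $I_u' \ge -\gamma I_u$, so $I_u(t) \ge I_{\min} e^{-\gamma(t-t_H)}$. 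Fix the window length $\tau = 2c_1 + 1$ and examine $[t_H, t_H+\tau]$. Because $\onenorm{u}\le c_1$, Markov's inequality gives $\abs{\{t : u(t) > 1/2\}} \le 2c_1$, so the ``good'' set $G = \{t \in [t_H, t_H+\tau] : u(t) \le 1/2\}$ has measure at least $\tau - 2c_1 = 1$.

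I would then argue by dichotomy. If $S_u(t_H + \tau) \le \fgb - \delta$ we are done, since monotonicity of $S_u$ propagates the gap to all later times. Otherwise $S_u \ge \fgb - \delta$ throughout the window, and estimating the drop of $S_u$ from below over $G$, where $1-u \ge 1/2$ and $I_u \ge I_{\min} e^{-\gamma \tau}$, yields
\[
 \delta \wge S_u(t_H) - S_u(t_H+\tau) \wge \tfrac12 (\gamma - \beta\delta)\, I_{\min}\, e^{-\gamma \tau},
\]
using $S_u(t_H) = \fgb$ and $\abs{G} \ge 1$. Choosing $\delta>0$ small enough that $\delta < \tfrac12(\gamma-\beta\delta) I_{\min} e^{-\gamma\tau}$ (possible because the right-hand side tends to the positive constant $\tfrac12\gamma I_{\min} e^{-\gamma\tau}$ as $\delta \to 0$) makes this inequality impossible, so the second alternative cannot occur. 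Hence $S_u(T_*) \le \fgb - \delta$ with $T_* = T_H + \tau$, uniformly in $u$, and the exponential tail bound follows from the one-line integration above with $\alpha = \beta\delta$ and explicit constant $C = \tfrac{1}{\beta\delta} e^{\beta\delta T_*}$ (bounding $I_u(T_*) \le 1$). The uniform gap is the crux; everything else is monotonicity and integration.
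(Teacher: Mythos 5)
Your proof is correct, and it shares the paper's overall skeleton: a uniform bound on the herd-immunity time from Proposition~\ref{the:Time2HerdImmunity}, then a uniform strict gap $S_u(t) \le \fgb - \delta$ for all $t \ge T_*$ and all admissible $u$, and finally the exponential-decay-plus-integration step with $\alpha$ proportional to $\delta$. Where you diverge is in the mechanism for the gap, which is indeed the crux. The paper obtains it directly, with no case analysis: over the window $[t_H^*, T_*]$ of length $c_1+1$ (where $t_H^*$ is the uniform herd-immunity bound and $T_* = t_H^* + c_1 + 1$), the budget forces $\int_{t_H^*}^{T_*}(1-u(t))\,dt \ge 1$; combining this with the crude prevalence floor $I_u(t) \ge I(0)e^{-\gamma T_*}$ (valid on all of $[0,T_*]$ because $(\log I_u)' \ge -\gamma$) and integrating $(\log S_u)' = -\beta(1-u)I_u$ over the window yields $S_u(T_*) \le (1-\delta)\fgb$ with $\delta = 1-\exp(-\beta I(0)e^{-\gamma T_*})$. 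Your Markov-inequality/dichotomy argument is a valid substitute, and it even buys something: by anchoring the window at $t_H(u)$ and using the refined bound \eqref{eq:iLower}, your prevalence floor $I(0)e^{-\gamma(3c_1+1)}$ does not degrade with the herd-immunity time, whereas the paper's floor $I(0)e^{-\gamma T_*}$ does (and $t_H^*$ can be very large, e.g.\ as $I(0) \to 0$ or $c_1 \to \infty$), so your $\delta$ and hence your decay rate $\alpha$ are quantitatively better; the price is the extra contradiction step, which the paper's direct computation avoids. One small correction: you should write $S_u(t_H) \le \fgb$ rather than $S_u(t_H) = \fgb$, since equality fails when $S(0) < \fgb$ (there $t_H = 0$); as the inequality you actually need in the chain $\delta \ge S_u(t_H) - S_u(t_H+\tau)$ is exactly this ``$\le$'', nothing breaks.
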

\begin{proof}
(i)  Proposition~\ref{the:Time2HerdImmunity} implies that for all controls with $\onenorm{u} \le c_1$, the time to reach herd immunity is bounded by the constant
\[
 t^*_H \weq \max\{ c_1 + \beta^{-1} I(0)^{-1} \log(\fbg S(0)) e^{\gamma c_1}, \, 0\}.
\]
Therefore, the susceptible share satisfies $S_u(t) \le \fgb$ from time $t^*_H$ onwards. We will choose a slightly larger time horizon $T_* = t^*_H + c_1 + 1$ and show that
\begin{equation}
 \label{eq:SAfterHerdImmunity}
 S_u(t) \wle (1-\delta) \fgb
\end{equation}
for all $t \ge T_*$ and all controls $u$ such that $\onenorm{u} \le c_1$, where $\delta = 1 - \exp(- \beta I(0) e^{- \gamma T_*})$. To verify \eqref{eq:SAfterHerdImmunity}, observe first that
\begin{equation}
 \label{eq:ControlAfterHerdImmunity}
 \int_{t_H^*}^{T_*} (1-u(t)) \, dt 
 \weq c_1 + 1 - \int_{t_H^*}^{T_*} u(t) \, dt 
 \wge c_1 + 1 - \onenorm{u}
 \wge 1.
\end{equation}
The crude lower bound $(\log I_u)' = \beta (1-u) S_u - \gamma \ge -\gamma$ implies that $I(t) \ge I(0) e^{-\gamma T_*}$ on $(0, T_*)$. By noting that $(\log S)' = - \beta (1-u) I$, it follows by \eqref{eq:ControlAfterHerdImmunity} that
\begin{align*}
 \log S_u(T_*) - \log S_u(t_H^*)
 \weq - \beta \int_{t_H^*}^{T_*} (1-u(t)) I_u(t) \, dt
 &\wle - \beta I(0) e^{-\gamma T_*}.
\end{align*}
By noting that the right side above equals $\log(1-\delta)$, and that
$S_u(t_H^*) \le \fgb$ due to $t_H(u) \le t_H^*$, we conclude that
$
 \log S_u(T_*)
 \le \log \fgb + \log(1-\delta),
$
and that \eqref{eq:SAfterHerdImmunity} is valid.

(ii) We will next derive an upper bound for the tail integrals of $I_u$.   By applying \eqref{eq:SAfterHerdImmunity}, we see that for all $t > T_*$,
\[
 (\log I_u)'
 \weq \beta (1-u) S_u - \gamma
 \wle \beta (1-\delta) \fgb - \gamma
 \weq -  \gamma \delta,
\]
so that
\[
 I_u(t)
 \wle I_u(T_*) e^{-  \gamma \delta (t-T_*)}
 \wle e^{- \gamma \delta (t-T_*)}.
\]
By integrating the above inequality, we see that
\[
 \int_{T}^\infty I_u(t) \, dt
 \wle \frac{1}{ \gamma \delta} e^{-  \gamma \delta(T-T_*)}
 \qquad \text{for all $T \ge T_*$}.
\]
Therefore, the claim holds for $\alpha = \gamma \delta$ and $C = \frac{1}{ \gamma \delta}
e^{\gamma \delta T_*}$.
\end{proof}

\section{Quantization}
\label{sec:Quantization}

We develop a frequency modulation approach to approximate a general piecewise continuous function with a square waveform having a small wavelength. Denote by $U(b)$ the set of piecewise continuous functions $u: [0,\infty) \to [0,1]$ such that $\supnorm{u} \le b$. Given an amplitude $b > 0$ and wavelength $h > 0$, we define a quantization operator $Q_{b,h}: U(b) \to U(b)$ by setting $Q_{b,h} u = \hu$ with
\begin{equation}
 \label{eq:BangBang}
 \hu(t)
 \weq \begin{cases}
  b &\quad \text{if $t \in I$}, \\
  0 &\quad \text{otherwise},
 \end{cases}
\end{equation}
where $I = \cup_{k \ge 1} I_k$ is a union of disjoint intervals $I_k = [h k - \tau_k, h k)$ having lengths
\[
 \tau_k
 \weq b^{-1} \int_{h k - h}^{h k} u(t) \, dt.
\]
We will show that $\hat u$ approximates $u$ well in a weak sense for small wavelengths. We first prove a general approximation property (Lemma~\ref{the:SlottingBangBang}), and then derive an approximation result of an epidemic trajectory controlled by $\hu$ (Proposition~\ref{the:QuantizationFinal}).

A function $\phi: [0,\infty) \to \R$ is called locally bounded if 
$\supnormt{\phi} = \sup_{0 \le s \le t} \abs{\phi(s)}$ is finite for all $t$, and locally
Lipschitz continuous if 
$\Lipnormt{\phi} = \sup_{0 \le t_1 < t_2 \le t} \frac{\abs{\phi(t_2) - \phi(t_1)}}{t_2-t_1}$
is finite for all $t$.

\begin{lemma}
\label{the:SlottingBangBang}
For any $b, h > 0$ and $u \in U(b)$, 
the approximation $\hu = Q_{b,h} u$ defined by \eqref{eq:BangBang} satisfies 
$\onenorm{\hu} = \onenorm{u}$, and
\[
 \left| \int_0^t \big( \hu(s) - u(s) \big) \phi(s) \, ds \, \right|
 \wle b h \left( \supnormt{\phi} + t \Lipnormt{\phi} \right)
\]
for all $t \ge 0$ and all locally bounded and locally Lipschitz continuous $\phi$.
\end{lemma}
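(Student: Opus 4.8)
The plan is to exploit the single structural feature for which the frequency-modulated quantization is designed: it preserves the integral of $u$ over each quantization block $J_k = ((k-1)h, kh]$. First I would record this. By construction $\int_{J_k} \hu(s)\,ds = b\tau_k = \int_{J_k} u(s)\,ds$, and since $u \le b$ we get $\tau_k = b^{-1}\int_{J_k} u \le h$, so each $I_k = [kh-\tau_k, kh)$ indeed sits inside $J_k$ and $\hu$ is well defined. Summing the block identities over all $k \ge 1$ gives $\onenorm{\hu} = \sum_k b\tau_k = \sum_k \int_{J_k} u = \onenorm{u}$, which is the first claim. The decisive consequence for the second claim is that the signed function $g := \hu - u$ has zero mean on every block, i.e.\ $\int_{J_k} g(s)\,ds = 0$.

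For the weighted estimate I would split $[0,t]$ into the complete blocks $J_1, \dots, J_M$ with $M = \floor{t/h}$ together with a leftover interval $(Mh, t]$ of length less than $h$. On each complete block I would subtract off the value of $\phi$ at a reference point, say the right endpoint $s_k = kh \in [0,t]$, and use the zero-mean property to annihilate the constant part:
\[
 \int_{J_k} g(s)\phi(s)\,ds
 \weq \int_{J_k} g(s)\big(\phi(s) - \phi(s_k)\big)\,ds.
\]
Here the gain is that $\phi(s) - \phi(s_k)$ is small: since $s, s_k \in [0,t]$ and $\abs{s - s_k} \le h$, local Lipschitz continuity gives $\abs{\phi(s) - \phi(s_k)} \le h\Lipnormt{\phi}$, while $\abs{g(s)} \le b$ because both $\hu$ and $u$ take values in $[0,b]$. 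Hence each complete block contributes at most $b h^2 \Lipnormt{\phi}$, and summing over the $M \le t/h$ complete blocks yields the bound $b h t \Lipnormt{\phi}$ for the complete part.

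Finally the leftover interval $(Mh, t]$ admits no cancellation, so I would bound it crudely by $\int_{Mh}^t \abs{g(s)}\abs{\phi(s)}\,ds \le b\supnormt{\phi}(t-Mh) \le bh\supnormt{\phi}$, which produces the $\supnormt{\phi}$ term. Adding the two contributions gives exactly $bh\big(\supnormt{\phi} + t\Lipnormt{\phi}\big)$, as required; the degenerate case $t < h$ (no complete blocks) is covered by the leftover bound alone. The proof is essentially routine once the mean-preservation identity is in hand, so the only genuine point of care is the bookkeeping of the incomplete final block: it is precisely the reason the bound carries a $\supnormt{\phi}$ term in addition to the $t\Lipnormt{\phi}$ term coming from the Lipschitz cancellation, and one must keep the reference points $s_k$ inside $[0,t]$ so that $\supnormt{\phi}$ and $\Lipnormt{\phi}$ legitimately control the integrand.
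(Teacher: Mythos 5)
Your proof is correct and follows essentially the same route as the paper's: both exploit the block-wise mean-preservation $\int_{J_k}\hu = \int_{J_k} u$ to subtract a reference value of $\phi$ on each complete block (you use the right endpoint $kh$, the paper the left endpoint $(k-1)h$, which is immaterial), bound those terms via the Lipschitz constant, and handle the incomplete final block with the sup-norm bound. Your added remark that $\tau_k \le h$ ensures $I_k \subset J_k$ is a harmless extra verification of well-definedness.
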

\begin{proof}
On each interval $[h k - h, h k)$, the quantized control alternates so that $\hu = 0$ for the first $h-\tau_k$ time units, and then $\hu = b$ for the remaining $\tau_k$ time units. The choice of $\tau_k$ then implies that
\begin{equation}
 \label{eq:IntegralInterval}
 \int_{h k - h}^{h k} \hu(s) \, ds
 \weq
 \int_{h k - h}^{h k} u(s) \, ds
\end{equation}
for all $k \ge 1$. By summing both sides of the above equality with respect to $k \ge 1$, we find that $\onenorm{\hu} = \onenorm{u}$.

Let us now fix $t \ge 0$. Denote $n = \floor{\frac{t}{h}}$ and $t_k = k h$, and observe that $\int_0^t \big( \hu(s) - u(s) \big) \phi(s) \, ds = \sum_{k=1}^n A_k + B_n$, where
$
 A_k
 = \int_{t_{k-1}}^{t_k} \big( \hu(s) - u(s) \big) \phi(s) \, ds
$
and 
$
 B_n
 = \int_{t_n}^t \big( \hu(s) - u(s) \big) \phi(s) \, ds.
$
By applying \eqref{eq:IntegralInterval}, we find that
\begin{align*}
 A_k
 \weq \int_{t_{k-1}}^{t_k} \big( \hu(s) - u(s) \big) \big( \phi(s) - \phi(t_{k-1}) \big) \, ds.
\end{align*}
Because $\abs{\phi(s) - \phi(t_{k-1})} \le h \Lipnormt{\phi}$ for all $t_{k-1} < s < t_k$, 
and $\supnorm{\hu - u} \le b$, it follows that
\begin{align*}
 | A_k |
 \wle h \Lipnormt{\phi} \int_{t_{k-1}}^{t_k} \big| \hu(s) - u(s) \big| \, ds
 \wle b h^2 \Lipnormt{\phi}.
\end{align*}
Furthermore, by noting that $t_n \le t < t_n + h$, it follows that
\[
 \abs{B_n}
 \wle \supnormt{\phi} \int_{t_n}^t \big| \hu(s) - u(s) \big| \, ds
 \wle b h \supnormt{\phi}.
\]
We conclude that
\begin{align*}
 \left| \int_0^t \big( \hu(s) - u(s) \big) \phi(s) \, ds \, \right|
 &\wle \sum_{k=1}^n \abs{A_k} + \abs{B_n} \\
 &\wle b n h^2 \Lipnormt{\phi} +b h \supnormt{\phi} \\
 &\wle b h t \Lipnormt{\phi} +  b h \supnormt{\phi}.
\end{align*}
\end{proof}

\begin{proposition}
\label{the:QuantizationFinal}
For any $b, h > 0$ and $u \in U(b)$, any $\beta,\gamma > 0$ and any initial state with $S(0),I(0) > 0$, the epidemic trajectories $(S_u, I_u)$ and $(S_{\hu}, I_{\hu})$ associated with $u$ and $\hu = Q_{b,h} u$ satisfy
\[
 \max\Big\{ \abs{S_{\hu}(t) - S_u(t)}, \, \abs{I_{\hu}(t)-I_u(t)} \Big\}
 \wle 3 \beta b h e^{(\beta+\gamma) t}
\]
for all $t \ge 0$.
\end{proposition}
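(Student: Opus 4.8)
The plan is to control the two trajectory differences $D_S = S_{\hu} - S_u$ and $D_I = I_{\hu} - I_u$ jointly through a Grönwall argument, the only nontrivial input being the weak closeness of $\hu$ to $u$ supplied by Lemma~\ref{the:SlottingBangBang}. Writing the dynamics \eqref{eq:Dynamics} in integral form and subtracting, both differences are driven by the single quantity
\[
 F \weq \beta\big[(1-\hu)S_{\hu}I_{\hu} - (1-u)S_uI_u\big],
\]
with $D_S(t) = -\int_0^t F$ and $D_I(t) = \int_0^t F - \gamma\int_0^t D_I$. I would then split $F = F_1 + F_2$, where $F_1 = \beta(u-\hu)S_{\hu}I_{\hu}$ isolates the control discrepancy and $F_2 = \beta(1-u)\big(S_{\hu}I_{\hu} - S_uI_u\big)$ is the coupling between the two trajectories.

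The term $F_1$ is precisely of the form treated by Lemma~\ref{the:SlottingBangBang}, applied with the test function $\phi = S_{\hu}I_{\hu}$. Here I would first record the elementary a priori bounds: since the shares satisfy $0 \le S_{\hu}, I_{\hu} \le 1$ and $S_{\hu}+I_{\hu}\le 1$, differentiating \eqref{eq:Dynamics} gives $\abs{S_{\hu}'} \wle \beta$ and $\abs{I_{\hu}'} \wle \beta+\gamma$, hence $\supnormt{S_{\hu}I_{\hu}} \wle 1$ and $\Lipnormt{S_{\hu}I_{\hu}} \wle \beta+\gamma$. The lemma then bounds $\abs{\int_0^t F_1}$ by $\beta b h\,\big(\supnormt{S_{\hu}I_{\hu}} + t\,\Lipnormt{S_{\hu}I_{\hu}}\big) \wle \beta b h\,(1 + (\beta+\gamma)t)$. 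For the coupling $F_2$ I would use the exact identity $S_{\hu}I_{\hu} - S_uI_u = S_u D_I + I_u D_S + D_S D_I$ together with $S_u + I_u \le 1$ to bound its linear part by $\beta\max\{\abs{D_S},\abs{D_I}\}$, the quadratic remainder $\beta\abs{D_S}\abs{D_I}$ being a higher-order term that is absorbed once the differences are known to be small.

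Setting $m(t) = \max\{\abs{D_S(t)}, \abs{D_I(t)}\}$, the two estimates combine into an integral inequality in which the coupling contributes rate $\beta$ and the recovery term $-\gamma\int_0^t D_I$ in the $I$-equation contributes an additional rate $\gamma$, so that
\[
 m(t) \wle \Big| \int_0^t F_1(s)\,ds \Big| + (\beta+\gamma)\int_0^t m(s)\,ds,
\]
and a Grönwall argument yields the exponential rate $\beta+\gamma$ appearing in the claim.

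The main obstacle is that $u$ enters the dynamics multiplied by the bilinear term $SI$, so there is no pointwise gain from replacing $u$ by $\hu$: indeed $\int_0^t\abs{u-\hu}$ does not vanish as $h \to 0$. The entire improvement must come from integrating the oscillating factor $u-\hu$ against the smooth trajectory $S_{\hu}I_{\hu}$, which is exactly what Lemma~\ref{the:SlottingBangBang} quantifies. The delicate point is then to feed this weak estimate into the Grönwall loop without losing the clean exponential form: because the lemma accumulates an error $\propto h$ over each of the $\sim t/h$ wavelengths, one must weight the per-interval $O(h^2)$ local errors by the stability factor $e^{(\beta+\gamma)(t-s)}$ and sum geometrically, so that the accumulation collapses into the single exponential $e^{(\beta+\gamma)t}$ rather than producing a spurious polynomial prefactor. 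Tracking the resulting constants then delivers the factor $3$.
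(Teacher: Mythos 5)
Your overall architecture coincides with the paper's proof: both isolate the control discrepancy evaluated along the quantized trajectory (your $F_1$ corresponds to the paper's $g(s,Y(s))-f(s,Y(s))$ with $Y=(S_{\hu},I_{\hu})$), both apply Lemma~\ref{the:SlottingBangBang} with the test function $\phi = S_{\hu}I_{\hu}$, and both close the estimate with Gr\"onwall at rate $\beta+\gamma$; the collapse of the affine forcing $C_1+C_2t$ into a single exponential that you describe in your last paragraph is exactly how the paper's integration by parts produces the factor $3$.

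There is, however, one genuine soft spot: your treatment of the coupling term $F_2$. You expand $S_{\hu}I_{\hu}-S_uI_u = S_uD_I + I_uD_S + D_SD_I$ and dismiss the quadratic remainder $D_SD_I$ as ``absorbed once the differences are known to be small,'' but the smallness of $D_S, D_I$ is precisely the conclusion of the proposition, so as stated this step is circular. Handling the remainder honestly with only the trivial bounds $\abs{D_S},\abs{D_I}\le 1$ inflates the coupling rate from $\beta$ to $2\beta$, hence the Gr\"onwall exponent to $2\beta+\gamma$, which does not imply the claimed $e^{(\beta+\gamma)t}$; and a bootstrap argument (assume the bound, re-derive it) does not close at the stated constant $3$ for all $h>0$. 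Two one-line repairs are available: (a) use the symmetric identity $S_{\hu}I_{\hu}-S_uI_u = \tfrac12(I_u+I_{\hu})D_S + \tfrac12(S_u+S_{\hu})D_I$, whose coefficients sum to at most $1$ because each trajectory satisfies $S+I\le 1$, so the quadratic term never appears; or (b) follow the paper: the Jacobian of $X\mapsto f(t,X)$ has max-row-sum norm at most $\beta+\gamma$ on the convex state space $\cX=\{X\in[0,1]^2: X_1+X_2\le 1\}$, and since both trajectories stay in $\cX$, the mean value inequality gives $\norm{f(t,Y)-f(t,X)}\wle (\beta+\gamma)\norm{Y-X}$ directly, with no remainder. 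A second, minor slip: from your a priori bounds $\abs{S_{\hu}'}\le\beta$ and $\abs{I_{\hu}'}\le\beta+\gamma$, the product rule yields $\Lipnormt{S_{\hu}I_{\hu}}\le 2\beta+\gamma$, not $\beta+\gamma$ as you assert (the sharper bound does hold, but only via the ODE structure, e.g.\ $\abs{(S_{\hu}I_{\hu})'}\le \beta S_{\hu}I_{\hu}\abs{S_{\hu}-I_{\hu}}+\gamma S_{\hu}I_{\hu}$); this is harmless, since any Lipschitz constant up to $2(\beta+\gamma)$ --- the paper's choice --- still delivers the final constant $3$.
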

\begin{proof}
The epidemic trajectory $(S_u,I_u)$ controlled by $u$ is the unique solution to $X'(t) = f(t,X(t))$, $X(0)=(S(0),I(0))$, where $f: [0,\infty) \times \R^2 \to \R^2$ is defined by
\begin{equation}
 \label{eq:SIRGenerator}
 f(t,X)
 \weq
 \begin{bmatrix}
  - \alpha(t) X_1 X_2, \\
  \alpha(t) X_1 X_2 - \gamma X_2
 \end{bmatrix}
\end{equation}
and $\alpha(t) = \beta(1-u(t))$.
The state space of the system is denoted by $\cX = \left\{X \in [0,1]^2: X_1 + X_2 \le 1 \right\}$ and we equip it with max norm $\norm{X} = \max\{\abs{X_1}, \abs{X_2}\}$. The Jacobian matrix of $X \mapsto f(t,X)$ equals
\[
 \partial_X f(t,X)
 \weq \begin{bmatrix}
  - \alpha(t) X_2 & -\alpha(t) X_1 \\
  \alpha(t) X_2 & \alpha(t) X_1 - \gamma
 \end{bmatrix}.
\]
The operator norm of the matrix $\partial_X f(t, X)$ induced by the max norm on $\R^2$ is the maximum absolute row sum, which is bounded by
\begin{align*}
 \norm{\partial_X f(t, X)}
 &\weq \max \Big\{ \alpha(t) ( \abs{X_1}+\abs{X_2} ), \,
 \alpha(t) \abs{X_2} + \abs{ \alpha(t) X_1 - \gamma } \Big\} \\
 &\wle \alpha(t) ( \abs{X_1}+\abs{X_2} ) + \gamma.
\end{align*}
Hence $\norm{\partial_X f(t,X)} \le \beta + \gamma$ for all $X \in \cX$ and $t \ge 0$, it follows \cite[Lemma 2.2]{Khalil_1995} that $f$ is Lipschitz continuous according to
\begin{equation}
 \label{eq:fLip}
 \norm{ f(t,X)-f(t,Y) }
 \le (\beta+\gamma) \norm{X-Y}
 \quad \text{for all $X,Y \in \cX$ and $t \ge 0$.}
\end{equation}

Let $Y: [0,\infty) \to \cX$ be the unique solution to
$Y'(t) = g(t,Y(t))$,
$Y(0) = (S(0),I(0))$,
where $g(t,X)$ is defined analogously to $f(t,X)$ but with $\alpha(t)$ in \eqref{eq:SIRGenerator} replaced by $\halpha(t) = \beta(1-\hu(t))$.
Then
\begin{align*}
 X(t) &\weq X(0) + \int_0^t f(s, X(s)) \, ds, \\
 Y(t) &\weq Y(0) + \int_0^t g(s, Y(s)) \, ds.
\end{align*}
Because $X(0)=Y(0)$, we find that the difference $Z(t) = Y(t) - X(t)$ satisfies
\begin{align*}
 Z(t)
 &\weq \int_0^t \Big( f(s, Y(s)) - f(s,X(s)) \Big) \, ds \\
 &\qquad + \int_0^t \Big( g(s, Y(s)) - f(s,Y(s)) \Big) \, ds.
\end{align*}
Inequality \eqref{eq:fLip} shows that
$\norm{f(s, Y(s)) - f(s,X(s))} \le (\beta+\gamma) \norm{z(s)}$ for all $s$, and we conclude that the max norm of $Z(t)$ is bounded by
\begin{equation}
 \label{eq:Gronwall1}
 \norm{Z(t)}
 \wle \int_0^t (\beta+\gamma) \norm{Z(s)} \, ds + h(t),
\end{equation}
where
\[
 h(t) \weq \left| \left| \, \int_0^t \Big( g(s, Y(s)) - f(s,Y(s)) \Big) \, ds \, \right| \right|.
\]

Observe next that
\[
 g(s, Y(s)) - f(s,Y(s))
 \weq \beta (u(s) - \hu(s)) \phi(s)
 \begin{bmatrix}
  - 1 \\
  + 1
 \end{bmatrix},
\]
where $\phi(s) = Y_1(s) Y_2(s)$, and hence
\[
 h(t)
 \weq \beta \left| \int_0^t (u(s) - \hu(s)) \phi(s) \, ds \right|.
\]
We also find that
$\abs{Y_1'(t)} = \abs{S_{\hu}(t)} \le \beta$
and 
$\abs{Y_2'(t)} = \abs{I_{\hu}(t)} \le \beta + \gamma$ for all $t \ge 0$.
Therefore, $\abs{\phi'(t)} \le 2 (\beta+\gamma)t$, and we conclude that $\phi$ is globally Lipschitz continuous according to $\abs{\phi(t)-\phi(s)} \le 2(\beta+\gamma) \abs{t-s}$ for all $s,t \ge 0$. Lemma~\ref{the:SlottingBangBang} then implies that 
\[
 h(t)
 \wle C_1 + C_2 t 
\]
for $C_1 = \beta b h$ and $C_2 = 2 \beta b h (\beta+\gamma)$. 
By combining this with \eqref{eq:Gronwall1}, we conclude that
\[
 \norm{Z(t)}
 \wle C_1 + C_2 t + \int_0^t (\beta+\gamma) \norm{Z(s)} \, ds,
\]
and Grönwall's inequality (e.g.\ \cite[Lemma 2.1]{Khalil_1995}) then implies that
\[
 \norm{Z(t)}
 \wle C_1 + C_2 t + \int_0^t (C_1 + C_2 s) (\beta+\gamma) e^{(\beta+\gamma)(t-s)} ds.
\]
Integration by parts shows that the right side equals
$C_1 e^{(\beta+\gamma) t} + \frac{C_2}{\beta+\gamma} \left( e^{(\beta+\gamma) t} - 1 \right)$,
from which we conclude that
\[
 \norm{Z(t)}
 \wle \left( C_1+ \frac{C_2}{\beta+\gamma} \right) e^{(\beta+\gamma) t}
 \weq 3 \beta b h e^{(\beta+\gamma) t},
\]
confirming the claim.
\end{proof}

\section{Prolongation}
\label{sec:Prolongation}

\subsection{A special function}

\begin{lemma}
\label{the:Special}
For any $\rho > 0$, the function
$f(x) = x - \rho^{-1} \log x$ is
strictly decreasing on $(0,\rho^{-1}]$, and
strictly increasing on $[\rho^{-1},\infty)$,
and has a unique minimum value $y_0 = \frac{1}{\rho}(1+\log \rho)$.
The restriction of $f$ into $(0,\rho^{-1}]$ is invertible, and the corresponding inverse function
$g: [y_0, \infty) \to (0,\rho^{-1}]$ is strictly decreasing.
\end{lemma}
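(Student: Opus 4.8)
The plan is to treat this as a routine single-variable calculus exercise governed entirely by the sign of $f'$. First I would differentiate, obtaining $f'(x) = 1 - \rho^{-1} x^{-1} = 1 - \frac{1}{\rho x}$ for $x > 0$. The unique zero of $f'$ is at $x = \rho^{-1}$, since $1 - \frac{1}{\rho x} = 0$ is equivalent to $\rho x = 1$. This single critical point already organizes the whole argument.

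Next I would read off the sign of $f'$ on each side of $\rho^{-1}$. For $0 < x < \rho^{-1}$ one has $\rho x < 1$, hence $\frac{1}{\rho x} > 1$ and $f'(x) < 0$, yielding strict decrease on $(0,\rho^{-1}]$; for $x > \rho^{-1}$ one has $\rho x > 1$, hence $f'(x) > 0$, yielding strict increase on $[\rho^{-1},\infty)$. Consequently $x = \rho^{-1}$ is the unique global minimizer, and direct substitution gives the minimum value $f(\rho^{-1}) = \rho^{-1} - \rho^{-1}\log(\rho^{-1}) = \frac{1}{\rho}(1 + \log\rho) = y_0$, using $\log(\rho^{-1}) = -\log\rho$.

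For the invertibility claim, I would note that $f$ restricted to $(0,\rho^{-1}]$ is continuous and strictly decreasing, hence injective, so it maps bijectively onto its image. To identify that image as $[y_0,\infty)$, I would combine the endpoint value $f(\rho^{-1}) = y_0$ with the limit $f(x) \to +\infty$ as $x \to 0^+$ (because $-\rho^{-1}\log x \to +\infty$ while $x \to 0$), and invoke the intermediate value theorem together with continuity. Thus $f\colon (0,\rho^{-1}] \to [y_0,\infty)$ is a continuous decreasing bijection, so its inverse $g$ is well defined on exactly $[y_0,\infty)$.

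Finally, the monotonicity of $g$ follows from the standard principle that the inverse of a strictly decreasing bijection is again strictly decreasing: if $y_1 < y_2$ in $[y_0,\infty)$ but $g(y_1) \le g(y_2)$, then applying the decreasing function $f$ would give $y_1 = f(g(y_1)) \ge f(g(y_2)) = y_2$, contradicting $y_1 < y_2$; hence $g(y_1) > g(y_2)$. I do not expect any genuine obstacle here. The only point requiring mild care is pinning down the exact domain $[y_0,\infty)$ of $g$, which rests on the limiting behavior of $f$ at $0^+$ rather than on any deeper argument.
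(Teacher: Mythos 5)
Your proposal is correct, and its core is the same as the paper's: both arguments hinge entirely on computing $f'(x) = 1 - \rho^{-1}x^{-1}$ and reading off its sign on either side of the unique critical point $x = \rho^{-1}$. The only divergence is in the final step. The paper proves that $g$ is strictly decreasing via the inverse function rule $g'(y) = \frac{1}{f'(g(y))} < 0$, which implicitly relies on differentiability of the inverse; you instead use the elementary order-theoretic fact that the inverse of a strictly decreasing bijection is strictly decreasing, proved by a two-line contradiction. Your route is slightly more robust (no smoothness of $g$ needed), while the paper's is more compact. You also take care of a point the paper leaves implicit: identifying the image of $f$ restricted to $(0,\rho^{-1}]$ as exactly $[y_0,\infty)$, using $f(\rho^{-1}) = y_0$, the divergence $f(x) \to +\infty$ as $x \to 0^+$, and the intermediate value theorem. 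This is needed to justify that $g$ is defined on all of $[y_0,\infty)$, so including it is a genuine (if small) improvement in completeness over the paper's proof.
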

\begin{proof}
Because $f'(x) = 1 - \rho^{-1} x^{-1}$ we find that $f'(x) < 0$ for $s < \rho^{-1}$, and $f'(x) > 0$ for $x > \rho^{-1}$. Hence the stated monotonicity properties follow. It also follows that $f$ restricted to $(0,\rho^{-1}]$ has a well-defined inverse function $g$. The inverse function rule $g'(y) = \frac{1}{f'(g(y))}$ shows that $g'(y) < 0$ for all $y > y_0$, and confirms that $g$ is strictly decreasing on $[y_0,\infty)$.
\end{proof}

\subsection{Prolonged interventions imply less infections}

\begin{lemma}
\label{the:Monotonicity}
Let $(S_{1},I_{2})$ be an epidemic trajectory
controlled by $u_1$ such that $u_1=0$ outside $[0,T]$. Let $(S_{2},I_{2})$ be an epidemic trajectory with the same initial state but a modified control $u_2 = u_1 + c 1_{[t_1,t_2]}$ with $T \le t_1 \le t_2$. Then $S_1(\infty) \le S_2(\infty)$.
\end{lemma}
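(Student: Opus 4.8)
The plan is to track the classical SIR first integral along each trajectory and show that the extra lockdown can only decrease it; the monotonicity of the inverse function from Lemma~\ref{the:Special} then reverses the inequality into the desired ordering $S_1(\infty) \le S_2(\infty)$. The key observation is that no delicate pointwise comparison of trajectories (which, as the paper stresses, can fail) is needed: it suffices to compare a single scalar functional at time infinity.

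First I would set up the geometry of the two controls. Since $u_1 = 0$ outside $[0,T]$ and $t_1 \ge T$, the indicator $1_{[t_1,t_2]}$ vanishes on $[0,t_1)$, so $u_2 = u_1$ there; by uniqueness of solutions to the ODE system \eqref{eq:Dynamics} the trajectories coincide on $[0,t_1]$, giving a common state at $t_1$. Moreover $u_1 \equiv 0$ on $[t_1,\infty)$, while $u_2 = c$ on $[t_1,t_2]$ and $u_2 = 0$ on $(t_2,\infty)$. Thus, restarting from the shared state at $t_1$, the comparison reduces to an uncontrolled tail (trajectory~1) versus a single-lockdown-then-uncontrolled tail (trajectory~2).

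Next I would introduce, with $\rho = \fbg$ and $f(x) = x - \fgb \log x$ as in Lemma~\ref{the:Special}, the functional $H_u(t) \weq f(S_u(t)) + I_u(t) = S_u(t) + I_u(t) - \fgb \log S_u(t)$ and compute its derivative along any controlled trajectory. Using $S_u' + I_u' = -\gamma I_u$ and $(\log S_u)' = -\beta(1-u) I_u$ one finds $H_u'(t) \weq -\gamma\, u(t)\, I_u(t) \wle 0$, so $H_u$ is constant where $u=0$ and non-increasing where $u \ge 0$. Applying this to the two tails: $H_1$ is constant on $[t_1,\infty)$, whereas $H_2$ is non-increasing on $[t_1,t_2]$ and constant on $(t_2,\infty)$. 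Since $H_1(t_1) = H_2(t_1)$ by the common state, this yields $\lim_{t\to\infty} H_2(t) \wle H_2(t_1) = H_1(t_1) = \lim_{t\to\infty} H_1(t)$.

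Finally I would invert. As the infection dies out, $I_i(\infty) = 0$, so $\lim_{t} H_i(t) = f(S_i(\infty))$, and the limiting susceptible fraction lands in the decreasing branch $(0,\fgb]$, whence $S_i(\infty) = g\big(f(S_i(\infty))\big)$ for the strictly decreasing inverse $g$ of Lemma~\ref{the:Special}. Because $g$ reverses order, the inequality $H_2(\infty) \le H_1(\infty)$ gives $S_2(\infty) \ge S_1(\infty)$, as claimed. The main point requiring care is justifying the two standard SIR tail facts — that $I_i(\infty)=0$ and that $S_i(\infty) \le \fgb$, so that $g$ is applicable on the correct branch (if $S(t) \ge S(\infty) > \fgb$ for all $t$, then $(\log I)' \ge \beta S(\infty) - \gamma > 0$ forces $I \to \infty$, a contradiction). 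With these in hand, the sign computation $H_u' = -\gamma u I_u$ supplies the whole argument.
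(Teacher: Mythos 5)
Your proposal is correct and is essentially the paper's own argument: it rests on the same functional (the paper's ``vulnerability'' $V = S + I - \frac{\gamma}{\beta}\log S$), the same reduction (the trajectories coincide up to $t_1$, so one only compares this scalar quantity at $t_1$ and at infinity), and the same final step of inverting $x \mapsto x - \frac{\gamma}{\beta}\log x$ on its decreasing branch via Lemma~\ref{the:Special}. The only differences are presentational: you obtain the decrease of $V$ from the single derivative identity $V' = -\gamma\, u\, I \le 0$, whereas the paper uses conservation of $\beta(1-u)(S+I) - \gamma\log S$ on intervals where $u$ is constant together with the monotonicity of $S_2+I_2$ (the integrated form of the same fact); and you make explicit the tail facts $I_i(\infty)=0$ and $S_i(\infty) \le \gamma/\beta$ needed to apply the lemma on the correct branch, which the paper leaves implicit.
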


\begin{proof}
The epidemic under a control $u_k$ evolves according to $S_k' = -\beta (1-u_k) S_k I_k$ and $I_k' = \beta (1-u_k) S_k I_k - \gamma I_k$. This implies that $(S_k+I_k)' = - \gamma I_k$ and $(\log S_k)' = - \beta (1-u_k) I_k$. As a consequence, we find that 
\begin{equation}
 \label{eq:ControlInvariant}
 \Big( \beta (1-u_k) (S_k+I_k) - \gamma \log S_k \Big)'
 \weq 0
\end{equation}
at every time instant in which $u_k' = 0$.

Let $V_k = S_k + I_k - \fgb \log S_k$, the \emph{vulnerability} of the population under control $u_k$. Because both controls vanish on $(t_2,\infty)$ and $I_1(\infty) = I_2(\infty) = 0$, we find from \eqref{eq:ControlInvariant} that
\begin{equation}
 \label{eq:FinalState}
 S_k(\infty) - \fgb \log S_k(\infty)
 \weq V_k(t_2), \quad k=1,2.
\end{equation}
Because both trajectories are equal up to time $t_1$, we see that $V_1(t_1) = V_2(t_1)$.
Because $u_1=0$ on the interval $(t_1, t_2)$, \eqref{eq:ControlInvariant} shows that $V_1(t_1) = V_1(t_2)$.  Analogously, by noting that $u_2=c$ on $(t_1, t_2)$, it follows that  
\begin{align*}
 & (1-c) (S_2(t_1)+I_2(t_1)) - \fgb \log S_2(t_1) \\
 &\weq (1-c) (S_2(t_2)+I_2(t_2)) - \fgb \log S_2(t_2),
\end{align*}
which can be rewritten as
\[
 V_2(t_2) 
 \weq V_2(t_1) + c (S_2(t_2)+I_2(t_2) - S_2(t_1) - I_2(t_1)).
\]
Because $(S_2+I_2)' = - \gamma I_2$, we see that $S_2(t)+I_2(t)$ is decreasing, and therefore,
\[
 V_2(t_2) 
 \wle V_2(t_1)
 \weq V_1(t_1)
 \weq V_2(t_1).
\]
Because of \eqref{eq:FinalState}, it follows (Lemma~\ref{the:Special}) that $S_1(\infty) \le S_2(\infty)$.
\end{proof}

\end{document}